\newcommand{\bb}{\mathbb}
\renewcommand{\Re}{\mathrm{Re}}
\renewcommand{\Im}{\mathrm{Im}}
\newtheorem{theorem}{Theorem}
\newtheorem{proposition}[theorem]{Proposition}
\theoremstyle{definition}
\theoremstyle{remark}
\numberwithin{equation}{section}
\numberwithin{theorem}{section}
\theoremstyle{remark}
\newtheorem*{remarks}{Remarks}
\newcommand{\bbR}{\mathbb{R}}
\newcommand{\bbC}{\mathbb{C}}
\newcommand{\calJ}{\mathcal{J}}
\begin{document}
	\title[Perturbations of chiral Gaussian $\beta$-ensembles]{Hermitian and non-Hermitian perturbations of chiral Gaussian $\beta$-ensembles}

\author{G\"{o}kalp Alpan}
\address{Department of Mathematics, Uppsala University, Uppsala, Sweden}
\email{gokalp.alpan@math.uu.se}
	
\author{Rostyslav Kozhan}
\address{Department of Mathematics, Uppsala University, Uppsala, Sweden}
\email{rostyslav.kozhan@math.uu.se}
\keywords{Random matrices, eigenvalues, chiral ensembles, Jacobi matrices}
	\begin{abstract}
We compute the joint eigenvalue distribution for the rank one Hermitian and non-Hermitian perturbations of chiral Gaussian $\beta$-ensembles  ($\beta>0$) of random matrices. 

	\end{abstract}
	
	\date{\today}
	\maketitle
	
	\section{Introduction}\label{ss:Intro}
	
	

Let $X$ be an $m\times n$ matrix with entries being i.i.d. real ($\beta=1$), complex ($\beta=2$), or quaternionic ($\beta=4$) centered normal random variables with  $\mathbb{E}(|X_{11}|^2) = \beta$. 
Then we say that the $(m+n)\times(m+n)$ Hermitian matrix 
\begin{equation}\label{eq:chiral}
	 {H} = \begin{pmatrix}
	\textbf{0}_{m\times m} & X\\\
	X^*& \textbf{0}_{n\times n}
\end{pmatrix}.
\end{equation}
belongs to the chiral Gaussian  orthogonal ($\beta=1$),  unitary $(\beta=2$),  symplectic ($\beta=4$) random matrix ensemble (chGOE, chGUE, 	chGSE, respectively).

In this paper we find explicitly the joint eigenvalue distribution of rank one Hermitian and non-Hermitian perturbations of chiral ensembles:

\begin{equation}\label{eq:chiralPert}
	\widetilde{H}= \begin{pmatrix}
	\Gamma & X\\\
	X^* &\textbf{0}_{n\times n}
\end{pmatrix}.
\end{equation}
Here $\Gamma$ is an $m\times m$ matrix with $\operatorname{rank}\,\Gamma =1$ and either $\Gamma=\Gamma^*$ (Hermitian perturbation) or $\Gamma=-\Gamma^*$ (anti-Hermitian perturbation). 
The matrix $\Gamma$ can be either deterministic or random but independent from $X$. We will also allow arbitrary $\beta>0$ different from $\beta=1,2,4$ (see Section~\ref{ss:Models} for details). 
 
 The main results are Theorems~\ref{th:Hermitian} and~\ref{th:nonHermitian} for Hermitian and non-Hermitian perturbations, respectively. We use methods developed in~\cite{KK,Koz17,Koz20}. Namely, first, we develop sparse (Jacobi) matrix models for chiral ensembles and their perturbations in the spirit of Dumitriu--Edelman~\cite{Dumede02} (see Section~\ref{ss:Models}). This allows us to use the theory of orthogonal polynomials and Jacobi matrices to compute a Jacobian of a certain change of variables (Section~\ref{ss:Jacobians}) which leads to the desired distribution (Sections~\ref{ss:Hermitian} and~\ref{ss:nonHermitian}).
 
By multiplying matrices in~\eqref{eq:chiral} and~\eqref{eq:chiralPert} by  $i$, and letting $Y=iX$, $\Lambda=i\Gamma$, we can equivalently work with the chiral Gaussian anti-Hermitian model
$$\begin{pmatrix}
	\textbf{0}_{m\times m} & Y\\\
	-Y^* &\textbf{0}_{n\times n}
\end{pmatrix}
$$
 and its Hermitian $\Lambda=\Lambda^*$ and anti-Hermitian $\Lambda=-\Lambda^*$ perturbations
\begin{equation}\label{eq:antiHermPert}
 \begin{pmatrix}
 	\Lambda & Y\\\
 	-Y^* &\textbf{0}_{n\times n}
 \end{pmatrix}.
\end{equation}
All the results in this paper can be trivially restated for this case: all the matrix models and eigenvalues simply get a factor of $i$. The benefit of this would be that the characteristic polynomial of~\eqref{eq:antiHermPert} in the case $\Lambda=\Lambda^*$  has real coefficients (instead of alternating between purely imaginary and purely real as in Section~\ref{ss:nonHermitian}), so its zeros belong to $\{z:\Re z<0\}$ and are symmetric with respect to $\bbR$.

Chiral random matrix theory has been an important instrument in quantum chromodynamics (QCD), going back to works \cite{va1,va0,va3}, see ~\cite{ake,Dam,va4,va2} for overviews, lecture notes, and further references. 

There is a vast literature on low rank {\it non-Hermitian} perturbations of Hermitian random matrices, owing to its physical applications in quantum chaotic scattering. For an overview, physical applications, and references, we refer readers to the papers \cite{fyo16,fyosav15,fyosom03,nucl}. The exact eigenvalue distribution of low rank non-Hermitian  perturbations of Gaussian and Laguerre $\beta$-ensembles was the topic of \cite{fyokho99,Koz17,Koz20,SokZel,StoSeb,Ull} in particular. 

The low rank non-Hermitian perturbations of chiral ensembles that we study here do not seem to have been studied in the literature before. A different type of non-Hermitian perturbations (of full rank) have been studied recently in~\cite{kie}.

Literature that studies {\it Hermitian} perturbations of Gaussian and Laguerre random matrix ensembles is huge. 

The additive model $H+\Gamma$ for perturbations of Gaussian random matrices $H$ bears the name Gaussian with an external source or shifted mean Gaussian ensemble, see~\cite{BleKuj,BreHik96,Pastur72,Zinn1,Zinn2} among many others. 

The usual model for perturbations of Laguerre ensembles is $ (I+\Gamma)^{1/2} X^* X  (I+\Gamma)^{1/2}$ with $\Gamma=\Gamma^*$ of low rank. This is typically referred to as the spiked Wishart  ensembles, see, e.g., \cite{BBP,BGN1,DesFor06,Johnstone}.  
Clearly this corresponds to 
perturbation  $X\mapsto X(I+\Gamma)^{1/2}$ and $X^*\mapsto (I+\Gamma)^{1/2} X^*$  
in the chiral model~\eqref{eq:chiral}. 

 Another type of perturbation of Laguerre/Wishart ensembles  actively studied in the literature is $(X+\Gamma)^* (X+\Gamma)$. This corresponds to $X\mapsto X+\Gamma$ and $X^*\mapsto (X+\Gamma)^*$  
 in~\eqref{eq:chiral} which bear the name chiral Gaussian ensembles with a source, see e.g.~\cite{DesFor08,For13,FGS18,SWG9} and~\cite[Sect 11.2.2]{Forrester-book}. 

We stress that eigenvalues of our Hermitian perturbed model~\eqref{eq:chiralPert}, however, do {\it not} correspond to a change of variables applied to eigenvalues of a simple perturbation of the Laguerre random matrix.



{ \bf Acknowledgments}: Research of G. A. was supported by Vergstiftelsen foundation.

\section{Jacobi matrix models}\label{ss:Models}
\subsection{Jacobification: case $m\le n$}\label{ss:chiralModel1}
As was shown by Dumitriu--Edelman~\cite{Dumede02}, $X$ can be bidiagonalized in the following sense: there are $m\times m$ and $n \times n$ unitary matrices $L$ and $R$ such that
\begin{equation}\label{BX}
	B := L X R=\left( \begin{matrix}
		x_1 &  & & & & \multicolumn{3}{|c}{}  \\
		y_1 & x_2 & & & & \multicolumn{3}{|c}{} \\
		& y_2 & \ddots  & & & \multicolumn{3}{|c}{\mathbf{0}_{m\times (n-m)}}\\
		& & \ddots &\ddots & & \multicolumn{3}{|c}{}  \\
		& & & y_{m-1} & x_m & \multicolumn{3}{|c}{}
	\end{matrix}\right)
\end{equation}
 with
 \begin{equation}\label{eq:L}
 	L e_1= L^*e_1=e_1,
 \end{equation}
 where $e_1$ is the vector with $1$ in the first entry and $0$ in all others. Here the $x_j$'s and $y_j$'s are independent random variables with the distributions
  \begin{align}
 \label{eq:x's}
 	x_j 
 	&\sim \chi_{\beta(n-j+1)}, \,\,\,\, 1\leq j\leq m,\\
 	 \label{eq:y's}
 	y_j &\sim \chi_{\beta(m-j)}, \,\,\,\, 1\leq j\leq m-1,
 \end{align}
where $\chi_\alpha$ stands for the chi-distributed random variable with parameter $\alpha>0$ given by the p.d.f. 
$\tfrac{1}{2^{\alpha/2-1}\Gamma(\tfrac{\alpha}{2})} x^{\alpha-1} e^{-x^2/2}$ for $x > 0$.
 
 Trivially, ~\eqref{BX} implies $B^* := R^* X^* L^* $ which means that our chiral matrix $H$ from~\eqref{eq:chiral} can be unitarily  reduced to
\begin{equation}\label{eq:chiralModelInterm}
\begin{pmatrix}
	L & \textbf{0}_{m\times n}\\\
	\textbf{0}_{n\times m} & R^*
\end{pmatrix}  H \begin{pmatrix}
L^* & \textbf{0}_{m\times n}\\\
\textbf{0}_{n\times m} & R
\end{pmatrix}
= 
\begin{pmatrix}
	\textbf{0}_{m\times m} &B\\\
	B^*& \textbf{0}_{n\times n}
\end{pmatrix}
\end{equation}
The right-hand side is a sparse matrix with independent entries. However we want a Jacobi (tridiagonal) form in order to employ theory of orthogonal polynomials. To this end, we introduce  the  $(m+n)\times (m+n)$  permutation matrix $P$ corresponding to the permutation
\begin{equation}\label{eq:permut}
	\begin{pmatrix}
		1 & 2   &  3 & 4   & \cdots & 2m-1 & 2m & \multicolumn{1}{|c}{2m+1} & \cdots & m+n \\
		1 & m+1 & 2  & m+2 & \cdots & m    & 2m & \multicolumn{1}{|c}{2m+1} & \cdots & m+n 
	\end{pmatrix}.
\end{equation}
This produces
\begin {equation}\label{ddd}
P\begin{pmatrix}
	\textbf{0}_{m\times m} &B\\\
	B^*&  \textbf{0}_{n\times n}
\end{pmatrix} P^*=
\left(
\begin{matrix}
	0 & x_1 &&&&& \multicolumn{3}{|c}{}\\
	x_1 & 0 & y_1 &&&& \multicolumn{3}{|c}{} \\
	& y_1 & 0 & x_2 &&& \multicolumn{3}{|c}{ \multirow{2}{*}{$\mathbf{0}_{2m\times(n-m)} $} }  \\
	& & \ddots & \ddots & \ddots && \multicolumn{3}{|c}{} \\
	&& & y_{m-1} & 0 & x_m & \multicolumn{3}{|c}{}\\
	&& & & x_m &0 & \multicolumn{3}{|c}{} \\
	\hline
	\multirow{2}{*}{} &  \multicolumn{4}{c}{\multirow{2}{*}{$\mathbf{0}_{(n-m)\times 2m}$}}  &\multirow{2}{*}{} & 
	\multicolumn{3}{|c}{\multirow{2}{*}{$\mathbf{0}_{(n-m)\times (n-m)}$}}
	\\ 
	&&&&&& \multicolumn{3}{|c}{}
	\\
	\end{matrix}
\right) = : J.
\end{equation}
Observe also that 
\begin{equation}\label{eq:P}
P e_1=P^*e_1=e_1, \quad P  I_{1\times 1} P^*=I_{1\times 1},
\end{equation}
where $I_{1\times 1}$ is the diagonal matrix with 1 in $(1,1)$-entry and $0$ everywhere else. We will use these properties later in the text.

%

 This ensemble already appeared earlier in \cite{jac}, see also \cite{dum2}.
 
 \subsection{Jacobification: case $m\ge n+1$}\label{ss:chiralModel2}
 Arguments of Dumitriu--Edelman work for the case $m\ge n+1$ with the following modifications:~\eqref{BX} becomes
 \begin{equation}\label{BX2}
 	B := L X R=\left( \begin{matrix}
 		x_1 &  & & &   \\
 		y_1 & x_2 & & &  \\
 		& y_2 & \ddots  & & \\
 		& & \ddots &\ddots &  \\
 		& & & y_{n-1} & x_n  \\
 		& & & & y_n \\
 		\hline
 		\multirow{2}{*}{} & \multirow{2}{*}{} &\multirow{2}{*}{$\textbf{0}_{m-n-1,n}$} &\multirow{2}{*}{} & \multirow{2}{*}{} \\ \\
  		 	\end{matrix}\right);
 \end{equation}
 distributions of $x_j$'s and $y_j$'s are now
  \begin{align}
 	\label{eq:x's2}
 	x_j 
 	&\sim \chi_{\beta(n-j+1)}, \,\,\,\, 1\leq j\leq n,\\
 	\label{eq:y's2}
 	y_j 
 	&\sim \chi_{\beta(m-j)}, \,\,\,\, 1\leq j\leq n;
 \end{align}
 equation~\eqref{eq:chiralModelInterm} remains unchanged; the permutation matrix $P$ in~\eqref{eq:permut} is now
 \begin{equation}\label{eq:permut2}
 	\begin{pmatrix}
 		1 & 2   &  3 & 4   & \cdots & 2n-1 & 2n & \multicolumn{1}{|c}{2n+1} & \cdots & m+n \\
 		1 & m+1 & 2  & m+2 & \cdots & n    & n+m & \multicolumn{1}{|c}{n+1} & \cdots & m 
 	\end{pmatrix};
 \end{equation}
finally,~\eqref{ddd} becomes
\begin {equation}\label{ddd2}
P\begin{pmatrix}
	\textbf{0}_{m\times m} &B\\\
	B^*&  \textbf{0}_{n\times n}
\end{pmatrix} P^*=
\left(
\begin{matrix}
	0 & x_1 &&&&& \multicolumn{3}{|c}{}\\
	x_1 & 0 & y_1 &&&& \multicolumn{3}{|c}{} \\
	& y_1 & 0 & x_2 &&& \multicolumn{3}{|c}{ \multirow{2}{*}{$\mathbf{0}_{(2n+1)\times(m-n-1)} $} }  \\
	& & \ddots & \ddots & \ddots && \multicolumn{3}{|c}{} \\
	&& & x_{n} & 0 & y_n & \multicolumn{3}{|c}{}\\
	&& & & y_n &0 & \multicolumn{3}{|c}{} \\
	\hline
	\multirow{2}{*}{} &  \multicolumn{4}{c}{\multirow{2}{*}{$\mathbf{0}_{(m-n-1)\times (2n+1)}$}}  &\multirow{2}{*}{} & 
	\multicolumn{3}{|c}{\multirow{2}{*}{$\mathbf{0}_{(m-n-1)\times (m-n-1)}$}}
	\\ 
	&&&&&& \multicolumn{3}{|c}{}
	\\
\end{matrix}
\right) = : J.
\end{equation}


\subsection{Chiral Gaussian $\beta$-ensembles}\label{ss:chiral}

In the previous two subsections we have obtained that $H$ from~\eqref{eq:chiral} is unitarily equivalent to the Jacobi matrix $J$ in~\eqref{ddd}  with~\eqref{eq:x's}--\eqref{eq:y's}.

It will occasionally be convenient to have a notation for the same Jacobi matrix but without the last zero block. So let us introduce the matrix $\calJ$ which is obtained by removing the last $n-m$ of zero rows and columns of $J$ in~\eqref{ddd} ($m\le n$) or the last $m-n+1$ of zero rows and columns   in~\eqref{ddd2}  ($m\ge n+1$). We obtain the $N\times N$ Jacobi matrix
\begin{equation}\label{generalJ}
	\calJ:= \begin{pmatrix}
		0 & a_1 \\
		a_1 & 0 & a_2 \\
		& a_2 & 0 & \ddots \\
		& & \ddots & \ddots & a_{N-1}\\
		&& & a_{N-1} & 0 
	\end{pmatrix},
\end{equation}
where $a_{2j-1} = x_j$, $a_{2j}=y_j$ and either $N=2m$,~\eqref{eq:x's}--\eqref{eq:y's} ($m\le n$) or $N=2n+1$,~\eqref{eq:x's2}--\eqref{eq:y's2} ($m\ge n+1$).

We will say that  $\calJ$ belongs to the \textbf{chiral Gaussian $\beta$-ensemble}, chG$\beta$E for short. This ensemble makes sense for arbitrary $\beta>0$, not just $\beta=1,2,4$. 

 \subsection{Rank one Hermitian perturbations}
 Now we consider the  perturbed model \eqref{eq:chiralPert} with Hermitian $\Gamma$. Since $\Gamma$ has rank 1, we can choose $\Gamma$ to be positive semi-definite. Let 
 \begin{equation}\label{l}
 	l= \|\Gamma\|_{{HS}}:= \left(\sum_{j,k=1}^m |\Gamma_{jk}|^2\right)^{1/2}.
 \end{equation}
 be the Hilbert--Schmidt norm of the perturbation. 
 
 \begin{proposition}\label{pr:Model1}
 	Let $\widetilde{H}$ be as in~\eqref{eq:chiralPert}. Assume that $\Gamma=\Gamma^*\ge {\normalfont \textbf{0}}_{m\times m}$ has $\operatorname{rank}\,\Gamma =1$ and $||\Gamma||_{HS} = l$. Further assume that $\Gamma$ has real, complex, quaternionic entries for $\beta=1,2,4$, respectively, that are either deterministic or random but independent from $X$.  Then  $\widetilde{H}$ is unitarily equivalent to
 	\begin{equation}\label{eq:Model1}
 	J+ l I_{1\times 1},
	\end{equation}
	where $J$ is~\eqref{ddd} or ~\eqref{ddd2}. 
 \end{proposition}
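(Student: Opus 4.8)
The plan is to produce explicit (random) unitaries conjugating $\widetilde{H}$ into the claimed form, exploiting the two structural properties already recorded in~\eqref{eq:L} and~\eqref{eq:P}: namely that the Dumitriu--Edelman left-rotation $L$ and the permutation $P$ both fix the vector $e_1$. The whole point is that, once the perturbation is aligned with the $e_1$ direction, every subsequent rotation leaves it undisturbed.

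First I would put the perturbation in normal form. Since $\Gamma=\Gamma^*\ge\mathbf 0_{m\times m}$ has rank one, we may write $\Gamma = l\, u u^*$ for a unit vector $u$, where $l=\|\Gamma\|_{HS}$ because $\|u u^*\|_{HS}=1$. Choose any unitary $V_0$ (a measurable function of $\Gamma$, hence independent of $X$) with $V_0 u = e_1$; conjugating by the block-diagonal unitary $\mathrm{diag}(V_0,I_{n\times n})$ sends $\Gamma\mapsto V_0\Gamma V_0^* = l\, e_1 e_1^*$ and $X\mapsto V_0 X$. Since we are in the cases $\beta=1,2,4$, the Gaussian law of $X$ is invariant under left multiplication by the fixed unitary $V_0$, so, conditionally on $\Gamma$, the matrix $V_0 X$ has the same law as $X$.

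Next I would apply the bidiagonalization of Section~\ref{ss:chiralModel1} (or~\ref{ss:chiralModel2}) to $V_0 X$: Dumitriu--Edelman yields unitaries $L,R$ with $L e_1 = L^* e_1 = e_1$ for which $B:=L(V_0 X)R$ is the bidiagonal matrix~\eqref{BX}/\eqref{BX2} with chi-distributed entries~\eqref{eq:x's}--\eqref{eq:y's} (resp.~\eqref{eq:x's2}--\eqref{eq:y's2}). Conjugating the already-rotated matrix by $\mathrm{diag}(L,R^*)$, i.e. conjugating $\widetilde{H}$ by $\mathrm{diag}(LV_0,R^*)$, produces the off-diagonal block $L(V_0X)R=B$ and the top-left block $L(l\,e_1e_1^*)L^* = l\,(Le_1)(Le_1)^* = l\,e_1e_1^*$, where the identity $Le_1=e_1$ from~\eqref{eq:L} is exactly what keeps the perturbation in place. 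Thus
\begin{equation*}
\mathrm{diag}(LV_0,R^*)\,\widetilde{H}\,\mathrm{diag}(LV_0,R^*)^*
= \begin{pmatrix}\mathbf 0_{m\times m}&B\\ B^*&\mathbf 0_{n\times n}\end{pmatrix} + l\,I_{1\times 1},
\end{equation*}
with $I_{1\times 1}$ now denoting the $(m+n)\times(m+n)$ matrix having $l$ in the corner.

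Finally I would conjugate by the permutation matrix $P$ of~\eqref{eq:permut}/\eqref{eq:permut2}. By~\eqref{ddd}/\eqref{ddd2} the first summand becomes $J$, while $P\,I_{1\times 1}\,P^*=I_{1\times 1}$ by~\eqref{eq:P}; hence the composite unitary $U=P\,\mathrm{diag}(LV_0,R^*)$ carries $\widetilde{H}$ to $J + l\,I_{1\times 1}$, with $J$ having the distribution in~\eqref{ddd}/\eqref{ddd2} because $V_0X$ and $X$ share a law. There is no genuine analytic obstacle here; the only thing requiring care, and the reason the scheme closes, is that after the initial alignment of $u$ with $e_1$ every rotation applied must fix the direction $e_1$, which is guaranteed precisely by the normalizations $Le_1=e_1$ and $Pe_1=e_1$ built into the model, the rest being bookkeeping of the chi-distributions.
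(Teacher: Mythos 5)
Your argument is correct and is essentially the paper's own proof: the paper likewise writes $\Gamma=U(lI_{1\times 1})U^*$, conjugates by $\mathrm{diag}(U^*,\mathbf 1_{n\times n})$ to move the perturbation to the $(1,1)$-entry, invokes unitary invariance of the Gaussian law of $X$ (together with independence of $\Gamma$ and $X$), and then pushes the perturbation through the Dumitriu--Edelman bidiagonalization and the permutation using exactly the fixed-point properties \eqref{eq:L} and \eqref{eq:P}. Your $V_0$ is the paper's $U^*$; the only cosmetic difference is that you spell out $\|uu^*\|_{HS}=1$ to identify $l$ with the Hilbert--Schmidt norm.
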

\begin{remarks}
	1. We will consider~\eqref{eq:Model1} for general $\beta>0$ and view it as the rank one Hermitian perturbation of the chiral Gaussian $\beta$-ensemble from Subsection~\ref{ss:chiral}. In fact, we will remove the zero block and will be working with $\calJ+ l I_{1\times 1}$.
	
	
	2. The trick in the proof with reducing  rank one perturbation to $(1,1)$-entry which carries through to the Jacobi matrix model is well known: it has been used in~\cite{KK,Koz17,Koz20}, and even earlier by Bloemendal--Vir\'{a}g~\cite{BloVir} in their study of spiked Laguerre ensembles.
\end{remarks}

\begin{proof} 
   $\Gamma$ can be represented as $\Gamma= U (lI_{1\times 1}) U^*$ for some $m\times m$ matrix $U$ which is orthogonal, unitary, or unitary symplectic for $\beta=1,2,4$, respectively. 
 
 Then the matrix $\widetilde{H}$ (see~\eqref{eq:chiralPert}) satisfies
 \begin {equation}\label{equi1}
 \begin{pmatrix}
 	U^* & \textbf{0}_{m\times n}\\\
 	\textbf{0}_{n\times m}& \textbf{1}_{n\times n}
 \end{pmatrix}
 \widetilde{H}
 \begin{pmatrix}
 	U & \textbf{0}_{m\times n}\\\
 	\textbf{0}_{n\times m} & \textbf{1}_{n\times n}
 \end{pmatrix}= 
 \begin{pmatrix}
 	\textbf{0}_{m\times m} & U^*X\\\
 	(U^*X)^* & \textbf{0}_{n\times n}
 \end{pmatrix}+lI_{1\times 1}.
\end{equation}
Here $\textbf{1}_{n\times n}$ stands for the $n\times n$ identity matrix. Now, note that $U$ is independent of $X$, so the joint distribution of the elements of $Y=U^*X$ is identical to the distribution of $X$ by Gaussianity. 
Hence we can apply the arguments from Subsection~\ref{ss:chiralModel1}/\ref{ss:chiralModel2} but to $Y$ instead of $X$ to arrive at
\begin {multline}\label{equi2}
P \begin{pmatrix}
	L & \textbf{0}_{m\times n}\\\
	\textbf{0}_{n\times m} & R^*
\end{pmatrix}
\left(
\begin{pmatrix}
	\textbf{0}_{m\times m} &Y\\\
	Y^*& \textbf{0}_{n\times n}
\end{pmatrix}
+lI_{1\times 1}\right)
\begin{pmatrix}
	L^* & \textbf{0}_{m\times n}\\\
	\textbf{0}_{n\times m} & R
\end{pmatrix}
P^*
\\
= 
P\begin{pmatrix}
	\textbf{0}_{m\times m} &B\\\
	B^*& \textbf{0}_{n\times n}
\end{pmatrix}
P^*
+l PI_{1\times 1}P^*
=
J+ l I_{1\times 1},
\end{multline}
where we have used~\eqref{eq:L} and~\eqref{eq:P}.
\end{proof}

 \subsection{Rank one non-Hermitian perturbations}
 In the exact same way, we can consider the perturbed model~\eqref{eq:chiralPert} with anti-Hermitian $\Gamma$. 
 
  \begin{proposition}\label{pr:Model2}
  	Let $\widetilde{H}$ be as in~\eqref{eq:chiralPert}.  Assume $\Gamma=-\Gamma^*$, $(-i\Gamma)\ge {\normalfont \textbf{0}}_{m\times m}$, $\operatorname{rank}\,\Gamma =1$ and $||\Gamma||_{HS} = l$. Further assume that $i\Gamma$ has real, complex, quaternionic entries for $\beta=1,2,4$, respectively, that are either deterministic or random but independent from $X$. Then  $\widetilde{H}$ is unitarily equivalent to
 	\begin{equation}\label{eq:Model2}
 	J+ i l I_{1\times 1},
 	\end{equation}
 	where $J$ is~\eqref{ddd} or ~\eqref{ddd2}.
 \end{proposition}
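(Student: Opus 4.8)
The plan is to reduce everything to the Hermitian case already settled in Proposition~\ref{pr:Model1}, by extracting a factor of $i$. The crucial observation is that although $\Gamma$ itself is anti-Hermitian, the matrix $-i\Gamma$ is Hermitian: indeed, using $\Gamma^*=-\Gamma$ one checks $(-i\Gamma)^*=i\Gamma^*=-i\Gamma$. By hypothesis $-i\Gamma\ge{\normalfont\textbf{0}}_{m\times m}$ and $\operatorname{rank}(-i\Gamma)=1$, and its Hilbert--Schmidt norm equals $\|\Gamma\|_{HS}=l$, since multiplication by the unit scalar (respectively unit quaternion) $-i$ preserves the Hilbert--Schmidt norm. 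Thus $-i\Gamma$ is precisely a rank one Hermitian positive semi-definite matrix of the type handled before.

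First I would diagonalize this Hermitian matrix exactly as in the proof of Proposition~\ref{pr:Model1}. Since $-i\Gamma$ is rank one, positive semi-definite, and has real, complex, or quaternionic entries for $\beta=1,2,4$ (this is where the hypothesis on the entries of $i\Gamma$ enters, as $i\Gamma$ and $-i\Gamma$ have the same entry type), there is an $m\times m$ matrix $U$ that is orthogonal, unitary, or unitary symplectic for $\beta=1,2,4$, respectively, with
$$
-i\Gamma = U\,(l\,I_{1\times 1})\,U^*.
$$
Multiplying through by $i$ gives the spectral representation $\Gamma = U\,(il\,I_{1\times 1})\,U^*$, which is the anti-Hermitian analogue of the decomposition used in~\eqref{equi1}.

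From here the argument is verbatim that of Proposition~\ref{pr:Model1}, with the scalar $l$ replaced by $il$ throughout. Conjugating $\widetilde{H}$ by $\operatorname{diag}(U^*,\textbf{1}_{n\times n})$ replaces $X$ by $Y=U^*X$ and turns the perturbation into $il\,I_{1\times 1}$, exactly as in~\eqref{equi1}; Gaussianity of $X$ together with the independence of $U$ ensures that $Y$ has the same joint distribution as $X$. Applying the Dumitriu--Edelman bidiagonalization of Subsection~\ref{ss:chiralModel1} or~\ref{ss:chiralModel2} to $Y$ and then conjugating by the permutation matrix $P$ yields $J$, while~\eqref{eq:L} and~\eqref{eq:P} guarantee that the perturbation $il\,I_{1\times 1}$ is left invariant by both conjugations $\operatorname{diag}(L,R^*)$ and $P$. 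This produces $\widetilde{H}$ unitarily equivalent to $J+il\,I_{1\times 1}$, mirroring~\eqref{equi2}.

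There is no genuine obstacle here; the entire content is the elementary but essential observation that $-i\Gamma$ is a Hermitian positive semi-definite rank one matrix, which lets the Hermitian machinery run unchanged. The only point demanding a moment of care is ensuring that $U$ can be taken of the correct type (orthogonal, unitary, or unitary symplectic) so that $U^*X$ retains the distribution of $X$, which is exactly the role played by the assumption that $i\Gamma$ has real, complex, or quaternionic entries.
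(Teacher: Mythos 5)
Your proposal is correct and follows essentially the same route as the paper: observe that $-i\Gamma$ is Hermitian, positive semi-definite, and rank one, write $-i\Gamma=U(lI_{1\times1})U^*$, and then run the proof of Proposition~\ref{pr:Model1} verbatim with the perturbation $ilI_{1\times1}$ in place of $lI_{1\times1}$. No gaps.
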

 
 \begin{proof} 
 	Notice that $-i\Gamma$ is Hermitian positive semi-definite and of rank one, so $-i\Gamma= U (lI_{1\times 1}) U^*$ for some $m\times m$ matrix $U$ which is orthogonal, unitary, or unitary symplectic for $\beta=1,2,4$, respectively. The rest of the arguments go through without any changes.
 \end{proof}

\subsection{Anti-bidiagonal models}
Matrix model $\calJ+l I_{1\times 1}$ (as well as $\calJ+il I_{1\times 1}$, of course) can be also represented in the so-called anti-bidiagonal form. To do so, we introduce another permutation matrix

$$
Q=
\begin{pmatrix}
	1 & 2 & 3 & \cdots & N-2 & N-1 & N \\
	N & N-2 & N-4& \cdots &  N-5  & N- 3& N-1
\end{pmatrix}.
$$


Then
\begin{equation}\label{eq:antibidiagonal}
Q \left(\calJ+l I_{1\times 1}\right) Q^*
=
\begin{pmatrix}
	0 & 0 & \cdots  & 0 & a_{N-1} \\
	0 & 0 & \cdots & a_{N-3} & a_{N-2} \\
	\vdots & \vdots &  & \vdots & \vdots \\
	0 	&a_{N-3} & \cdots & 0 & 0\\
	a_{N-1} & a_{N-2}& \cdots& 0 & 0 
\end{pmatrix}
\end{equation}
This matrix has two anti-diagonals with the perturbation term $l$ being now ``in the middle'' at the position $(\lfloor \tfrac{N}{2} \rfloor +1,\lfloor \tfrac{N}{2} \rfloor +1)$.

\section{Location of the eigenvalues}
In the next two statements, we find all the possible configurations of eigenvalues for our perturbed Jacobi ensembles~\eqref{eq:Model1}, ~\eqref{eq:Model2}. Even more is true: every possible configuration of eigenvalues occurs exactly once. 

\begin{proposition}\label{bijection2}
Let $N>1$. Then there is a one-to-one correspondence between $N$ points $z_1, z_2,\ldots, z_{N}$ with $z_1>-z_2>z_3>\cdots>(-1)^{N-1} z_{N}$ and the matrices $\calJ+lI_{1\times 1}$ where $\calJ$ is of the form \eqref{generalJ} and $a_1,\ldots, a_{N-1},l >0$.
\end{proposition}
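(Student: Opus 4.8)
The plan is to read off the eigenvalues of $\calJ+lI_{1\times 1}$ from the Weyl function of the unperturbed chiral matrix $\calJ$, and then to invert this correspondence via the inverse spectral theory of Jacobi matrices.

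First I would record two structural facts about $\calJ$ in \eqref{generalJ}. Conjugating by the diagonal sign matrix $D=\mathrm{diag}(1,-1,1,\dots)$ gives $D\calJ D=-\calJ$, so the spectrum of $\calJ$ is symmetric about $0$; write it $\mu_1>\mu_2>\cdots>\mu_N$ with $\mu_j=-\mu_{N+1-j}$, all simple since the $a_i$ are positive. Since $De_1=e_1$, the same computation shows that the Weyl function $m(z):=\langle e_1,(\calJ-z)^{-1}e_1\rangle$ is odd, $m(-z)=-m(z)$. Moreover $m=-q_{N-1}/p_N$, the (signed) ratio of the characteristic polynomials $q_{N-1}$ of $\calJ^{(1)}$ (the matrix $\calJ$ with its first row and column deleted) and $p_N$ of $\calJ$; it has a simple pole at each $\mu_j$, and $m'(z)=\langle e_1,(\calJ-z)^{-2}e_1\rangle>0$, so $m$ increases strictly from $-\infty$ to $+\infty$ across each open gap between consecutive poles. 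By the rank one determinant identity, $\det\bigl(zI-\calJ-lI_{1\times1}\bigr)=\det(zI-\calJ)\,(1+l\,m(z))$, so the eigenvalues of $\calJ+lI_{1\times1}$ are exactly the solutions of $m(z)=-1/l$.

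Next I would locate these solutions gap by gap. Monotonicity gives exactly one solution in each gap $(\mu_{k+1},\mu_k)$ and one in $(\mu_1,+\infty)$, and none in $(-\infty,\mu_N)$ (there $m>0$), accounting for all $N$ eigenvalues. For the sign pattern I would use oddness: if $\zeta$ solves $m(z)=-1/l$ then $-\zeta$ solves $m(z)=+1/l$, and since $m$ is increasing the $(-1/l)$-solution of any gap lies to the left of the $(+1/l)$-solution of that same gap. Because $\mu_j=-\mu_{N+1-j}$, reflecting the eigenvalues produces precisely the $(+1/l)$-solutions, and comparing the two solutions inside each gap shows that, listed in order of decreasing modulus, the eigenvalues have strictly decreasing moduli and alternating signs $+,-,+,\dots$, the largest-modulus one lying in $(\mu_1,\infty)$ and hence being positive. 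Setting $z_k=(-1)^{k-1}\cdot(k\text{-th largest modulus})$, this is exactly $z_1>-z_2>z_3>\cdots>(-1)^{N-1}z_N>0$, so the eigenvalue map is well defined into the stated set (with the additional positivity $(-1)^{N-1}z_N>0$, which is in fact needed for the correspondence to be onto).

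Finally I would establish the bijection. For injectivity, note that $p(z)=\prod_k(z-z_k)$ determines everything: since $p_N$ and $q_{N-1}$ have opposite parities, the part of $p$ of parity $(-1)^N$ equals $p_N$ and the remaining part equals $-l\,q_{N-1}$; hence $l=\sum_k z_k=\operatorname{tr}(\calJ+lI_{1\times1})$ and the monic polynomials $p_N,q_{N-1}$ are recovered uniquely, and the pair $(p_N,q_{N-1})$ of characteristic polynomials of a Jacobi matrix and of its lower principal $(N-1)$-block determines that Jacobi matrix uniquely. For surjectivity, given a configuration in the target set I would set $l=\sum_k z_k>0$, extract $p_N,q_{N-1}$ as above, and verify that $-q_{N-1}/p_N$ is a genuine Weyl (Herglotz) function of a symmetric spectral measure: inverse spectral theory then yields a unique Jacobi matrix with $a_i>0$, whose diagonal vanishes because the measure is symmetric, i.e.\ it is $\calJ+lI_{1\times1}$ with the prescribed $l$. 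The main obstacle is exactly this last verification, that the parity components of $p$ have real, simple, strictly interlacing roots with positive residues, which amounts to reversing the gap analysis above; alternatively one can bypass it by an invariance-of-domain argument, showing the continuous injective eigenvalue map from $(0,\infty)^N$ into the connected target set is proper and hence a homeomorphism onto it.
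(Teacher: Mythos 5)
Your route is genuinely different from the paper's: the paper proves this proposition in one line by passing to the anti-bidiagonal form \eqref{eq:antibidiagonal} and citing Holtz's inverse eigenvalue theorem for symmetric anti-bidiagonal matrices, whereas you reprove it from scratch through the Weyl function $\textbf{m}(z)=\langle e_1,(\calJ-z)^{-1}e_1\rangle$. Your forward analysis is correct and essentially complete: $D\calJ D=-\calJ$ does give oddness of $\textbf{m}$; the identity $\det(z-\calJ_l)=\det(z-\calJ)\,(1+l\,\textbf{m}(z))$ is exactly the paper's \eqref{longl1}; the gap-by-gap count of solutions of $\textbf{m}(z)=-1/l$ is right; and chaining, across consecutive gaps, the inequality between the $(-1/l)$-solution and the $(+1/l)$-solution (the latter being the reflection of the $(-1/l)$-solution of the mirror gap) produces exactly $z_1>-z_2>z_3>\cdots>(-1)^{N-1}z_N>0$. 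Your observation that the final inequality $(-1)^{N-1}z_N>0$ must be appended for the correspondence to be onto is correct: it is present in Holtz's result and in the configuration spaces of Theorem \ref{th:Hermitian}, and is already visible for $N=2$, where $z_1z_2=-a_1^2<0$. The injectivity argument via the parity decomposition of $\det(z-\calJ_l)$ into $p_N$ and $-l\,q_{N-1}$, followed by standard inverse spectral theory, is also sound.

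The one genuine gap is surjectivity, which you flag yourself: it is precisely the content of Holtz's theorem, and neither of your two proposed routes is carried out. On the direct route, the needed verification reduces to a sign analysis of $\tfrac12\prod_j(z_k\pm z_j)$ (compare \eqref{split}--\eqref{sum}) showing that the parity-$(-1)^N$ part of $\prod_k(z-z_k)$ has $N$ simple real zeros symmetric about the origin, that the other parity part strictly interlaces it, and that the resulting residues give positive, symmetric weights; this is several lines of honest work, not a formality. On the invariance-of-domain route, openness of the image follows from your injectivity, but properness still needs an argument: boundedness of parameters comes from $l=\sum_k z_k$ and $\sum_j a_j^2=\tfrac12\bigl(\sum_k z_k^2-l^2\bigr)$, while for the degenerations $a_j\to0$ or $l\to0$ you must observe that the limiting matrix decouples into (or becomes) a zero-diagonal Jacobi block, whose spectrum is symmetric about $0$, so the limiting configuration has two eigenvalues of equal modulus (or a vanishing one) and therefore sits on the boundary of the target cone. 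With either supplement your argument closes; as written it stops exactly where the paper instead invokes \cite{Holtz}.
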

\begin{proof}
	This was shown by Holtz \cite[Corollary 2]{Holtz} who classified eigenvalues of matrices~\eqref{eq:antibidiagonal}. 
\end{proof}

\begin{proposition}\label{bijection}
Let $N>1$. Then there is a one-to-one correspondence between $N$ points $z_1, z_2,\ldots, z_{N}$ in $\bb C_+:=\{z\in \bb C: \Im z>0   \}$ (counting multiplicity)  that are symmetric with respect to the imaginary axis and the matrices  $\calJ+ilI_{1\times 1}$ where $J$ is of the form \eqref{generalJ}
and $a_1,\ldots, a_{N-1},l >0$.
\end{proposition}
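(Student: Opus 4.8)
The plan is to realize the correspondence as a composition of three classical bijections, with the image pinned down by a Hermite--Biehler (equivalently Routh--Hurwitz) argument. First I would check the forward map is well defined, i.e. every $M:=\calJ+il I_{1\times 1}$ has its $N$ eigenvalues in $\mathbb{C}_+$ and symmetric about the imaginary axis. For the location: if $Mv=zv$ with $\|v\|=1$, then $z=v^*Mv=v^*\calJ v+il|v_1|^2$, and since $\calJ$ is real symmetric, $v^*\calJ v\in\bbR$, so $\Im z=l|v_1|^2\ge 0$. Because $\calJ$ is tridiagonal with nonzero off-diagonal entries, no eigenvector can have $v_1=0$ (the three-term recurrence would force $v\equiv 0$), hence $\Im z>0$ and $z\in\mathbb{C}_+$. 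For the symmetry I would use $D\calJ D=-\calJ$ and $DI_{1\times 1}D=I_{1\times 1}$ with $D=\mathrm{diag}(1,-1,1,\dots)$, giving $D(-\overline{M})D=M$; thus $M$ and $-\overline{M}$ are similar, so the eigenvalue multiset is invariant under $z\mapsto-\overline{z}$, i.e. symmetric about the imaginary axis (with multiplicities).

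Next I would pass to characteristic polynomials. By the matrix determinant lemma applied to the rank-one change in the $(1,1)$ entry, $\Phi(z):=\det(zI-M)=P(z)-il\,Q(z)$, where $P(z)=\det(zI-\calJ)$, $Q(z)=\det(zI-\calJ^{(1)})$, and $\calJ^{(1)}$ is $\calJ$ with its first row and column deleted. Since $\calJ,\calJ^{(1)}$ have zero diagonal, $P(-z)=(-1)^N P(z)$ and $Q(-z)=(-1)^{N-1}Q(z)$, so $\Phi$ has the alternating real/purely-imaginary coefficient pattern that is equivalent to the $z\mapsto-\overline{z}$ symmetry of its roots. I would then rotate by $z=i\zeta$: a short computation gives $i^{-N}\Phi(i\zeta)=\tilde P(\zeta)-l\,\tilde Q(\zeta)$ with $\tilde P,\tilde Q$ real monic of degrees $N,N-1$, so that ``roots of $\Phi$ in $\mathbb{C}_+$, symmetric about the imaginary axis'' becomes ``the real polynomial $\tilde\Phi:=\tilde P-l\tilde Q$ has all roots in the open right half-plane.''

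Finally I would assemble the bijection. Inverse spectral theory for Jacobi matrices gives a bijection between $(a_1,\dots,a_{N-1})\in(0,\infty)^{N-1}$ and pairs $(P,Q)$ of monic real polynomials of degrees $N,N-1$ whose roots strictly interlace; the extra parity of $P,Q$ corresponds exactly to the spectral measure being symmetric about $0$, which forces the reconstructed diagonal to vanish (so the matrix is of the form $\calJ$). Adjoining $l>0$ and reading off $\Phi=P-ilQ$ is a further bijection onto monic polynomials with the alternating coefficient pattern, with $l=\sum_j\Im z_j>0$ recovered from the sum of roots. It then remains to match this image with the target set, which is the crux: by the Hermite--Biehler / Routh--Hurwitz criterion, for fixed $l>0$ the polynomial $\tilde P-l\tilde Q$ has all roots in the open right half-plane if and only if the roots of $\tilde P$ and $\tilde Q$ (the even and odd parts of $\tilde\Phi$) are real, simple, and interlacing with the correct Wronskian sign---precisely the interlacing of the roots of $P$ and $Q$ used in the inverse spectral step. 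Composing the three bijections yields the claimed correspondence.

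The main obstacle I anticipate is this last equivalence and its bookkeeping: verifying that the half-plane location of the roots of $\Phi$ is \emph{equivalent} (not merely implied by) strict interlacing of $P$ and $Q$, and that all positivity/parity normalizations line up so the three maps compose to a genuine bijection onto exactly the stated configuration space. This includes the treatment of multiplicities, where $P,Q$ remain simple while $\Phi$ may acquire repeated roots in $\mathbb{C}_+$, and careful tracking of leading signs in Hermite--Biehler to guarantee $l>0$ and $a_j>0$ rather than a sign-flipped variant. Alternatively, one can bypass Hermite--Biehler by invoking Holtz's classification of the eigenvalues of the anti-bidiagonal form~\eqref{eq:antibidiagonal}, now with a purely imaginary central entry, paralleling the proof of Proposition~\ref{bijection2}.
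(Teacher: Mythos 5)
Your proposal is sound in outline, but it takes a genuinely different route from the paper, whose proof is almost entirely a citation: existence and uniqueness of $(\calJ,l)$ for a given symmetric configuration is taken from Arlinski\u{\i}--Tsekanovski\u{\i} \cite[Theorem~5.1, Corollary~6.5]{ArlTse06}, the inclusion of the spectrum in $\mathbb{C}_+$ from \cite[Prop.~4.1]{ArlTse06}, and only the symmetry is argued directly, via conjugation by $W=\mathrm{diag}(1,-1,1,\dots)$ --- which, since $\calJ_{il}$ is complex symmetric, is exactly your $D$-similarity. What you propose instead is a self-contained proof: the numerical-range identity $\Im z=l|v_1|^2$ plus the three-term-recurrence argument forbidding $v_1=0$ handles the location (this is essentially how \cite{ArlTse06} prove their Proposition~4.1), and the chain configuration $\leftrightarrow$ monic $\Phi=P-ilQ$ with the alternating coefficient pattern $\leftrightarrow$ interlacing pair $(P,Q)$ together with $l>0$ $\leftrightarrow$ $(\calJ,l)$ reproves their Theorem~5.1; this buys independence from the reference at the cost of the bookkeeping you flag. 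On that crux, the cleanest closure is not Hermite--Biehler in the rotated variable but its Herglotz-function form: $\det(z-\calJ_{il})=P(z)\bigl(1+il\,\mathbf{m}(z)\bigr)$ vanishes exactly where $\mathbf{m}(z)=i/l$, and $\mathbf{m}=-Q/P$ maps $\mathbb{C}_{\pm}$ into $\mathbb{C}_{\pm}$ precisely when $P$ and $Q$ have real, simple, strictly interlacing zeros with the right normalization; this gives both directions at once, handles multiple roots of $\Phi$, and makes transparent that the condition is independent of $l>0$. Two small corrections: the zeros of $\tilde P$ and $\tilde Q$ are purely imaginary, not real (it is the zeros of $P$ and $Q$ that are real and interlace); and the closing suggestion to ``invoke'' Holtz \cite{Holtz} does not apply as stated, since that classification concerns real symmetric anti-bidiagonal matrices --- with a purely imaginary central entry one needs a genuinely non-Hermitian analogue, which is precisely what \cite{ArlTse06} supplies.
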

\begin{proof}

Let $z_1,\ldots, z_{N}$ be $N$ points in $\bb C_+$ that are symmetric with respect to the imaginary axis. By the results of Arlinski\u{\i}--Tsekanovski\u{\i}~\cite[Theorem~5.1, Corollary~6.5]{ArlTse06}, there is a $\calJ$ of the form \eqref{generalJ} and $l>0$ such that $z_1,\ldots, z_{N}$ are the eigenvalues of  $\calJ+ilI_{1\times 1}$.

Conversely, let $\calJ$ be a Jacobi matrix as in \eqref{generalJ} and $l>0$. By~\cite[Prop~4.1]{ArlTse06} eigenvalues of  $\calJ+ilI_{1\times 1}$ belong to $\bb C_+$. Since $-(\calJ+ilI_{1\times 1})^*=W(\calJ+ilI_{1\times 1})W^*$, where $W$ is the diagonal unitary matrix with diagonal $\{1,-1,1-1,\ldots\}$, we obtain the symmetry of the eigenvalues with respect to the imaginary axis. 



\end{proof}


\section{Spectral measures of chiral Gaussian $\beta$-ensembles}
Given an $k\times k$ Hermitian matrix $H$, define its spectral measure with respect to $e_1$ to be the probability measure $\mu$ satisfying
\begin{equation}\label{eq:spmeas}
\langle e_1, H^j  e_1 \rangle = \int_\bbR x^j d\mu(x), \quad \mbox{ for all } j\in  \bb Z_{\ge 0}.
\end{equation}
We will refer to it as simply ``the spectral measure'' from now on. By diagonalizing $H$ and assuming $e_1$ is cyclic, one can see that 
$$
\mu = \sum_{j=1}^k w_j \delta_{\lambda_j}
$$
with $\sum_{j=1}^k w_j = 1$ and $w_j>0$. Here $\{\lambda_j\}_{j=1}^k$ are the eigenvalues of $H$ (which are distinct by cyclicity), and $w_j = |\langle v_j,e_1\rangle|^2$, where $v_j$ is the corresponding eigenvector.

Now let us assume that $H$ is from the chGOE, chGUE, or chGSE. 
As we show in Subsections~\ref{ss:chiralModel1} and~\ref{ss:chiralModel2}, $H$ and $J$ are unitarily equivalent $H=UJU^*$. Moreover, $Ue_1=U^*e_1=e_1$ implies that they have identical spectral measures. Finally, spectral measures of $J$ and $\calJ$ coincide, which can be trivially seen from~\eqref{eq:spmeas}. In the next theorem, we compute this common spectral measure. The result works for any $\beta>0$.
\begin{theorem}\label{thmmm}
	For $\beta>0$ let $\calJ$ belong to chG$\beta$E 
	$($see Subsection~\ref{ss:chiral}$)$. Let $a=|n-m|+1-2/\beta$.
	\begin{enumerate}[$(i)$]
		\item If $m\le n$ $($that is, $\calJ$ is $2m\times 2m$$)$, then with probability $1$ the spectral measure of $\calJ$ is:
		\begin{equation}\label{eq:spMeasDiscrete}
		\mu=\sum_{j=1}^m \tfrac12 w_j (\delta_{\lambda_j} + \delta_{-\lambda_j})
		\end{equation}
		with the joint distribution of $\lambda_1,\ldots, \lambda_m$, $w_1,\ldots ,w_{m-1}$  given by
		\begin{align}
			\frac{2^m}{h_{\beta,m,a}}&\prod_{j=1}^m\lambda_j^{\beta a+1}e^{-\lambda_j^2/2}\prod_{1\leq j<k\leq m}|\lambda_k^2-\lambda_j^2|^\beta d\lambda_1\dots d\lambda_m              \\
			&\times  \frac{\Gamma(\beta m/2)}{\Gamma(\beta/2)^m}\prod_{j=1}^m w_j^{\beta/2-1} d w_1\dots d w_{m-1}.
		\end{align}
				
		\item If $m \ge n+1$ $($that is, $\calJ$ is $(2n+1)\times(2n+1)$$)$, then with probability $1$ the spectral measure of $\calJ$ is:
		\begin{equation}\label{eq:spMeasDiscrete2}
		\mu=w_0\delta_{0}+\sum_{j=1}^n \tfrac12 w_j (\delta_{\lambda_j} + \delta_{-\lambda_j})
		\end{equation}
		with the joint distribution of $\lambda_1,\ldots, \lambda_n$, $w_1,\ldots ,w_n$ given by
		\begin{align}
			\frac{2^n}{h_{\beta,n,a}}&\prod_{j=1}^n\lambda_j^{\beta a+1}e^{-\lambda_j^2/2}\prod_{1\leq j<k\leq n}|\lambda_k^2-\lambda_j^2|^\beta d\lambda_1\dots d\lambda_n             \\
			&\times  \frac{\Gamma(\beta m/2)}{\Gamma(\beta/2)^n \Gamma{(\beta(m-n)/2)}} 
			w_0^{\beta(m-n)/2-1}
			\prod_{j=1}^n w_j^{\beta/2-1}d w_1\dots d w_{n}.
		\end{align}
	\end{enumerate}
 Here the normalization constant is
	\begin{equation}
	h_{\beta,s,a}=2^{s(a\beta/2+1+(s-1)\beta/2)}\prod_{j=1}^s\frac{\Gamma(1+\beta j/2)\Gamma{(1+\beta a/2+\beta(j-1)/2)}}{\Gamma{(1+\beta/2)}}.
\end{equation}
\end{theorem}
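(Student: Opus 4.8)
The plan is to compute the spectral measure of $\calJ$ by exploiting the fact that $\calJ$ is a tridiagonal (Jacobi) matrix whose entries are precisely the $\chi$-distributed variables $x_j,y_j$ with known distributions~\eqref{eq:x's}--\eqref{eq:y's} (or~\eqref{eq:x's2}--\eqref{eq:y's2}). For a Jacobi matrix, the spectral data $(\lambda_j,w_j)$ with respect to $e_1$ is in bijective correspondence with the off-diagonal entries $a_1,\ldots,a_{N-1}$, and this bijection is the classical coefficient-to-measure map of orthogonal polynomial theory. So the core task is a change of variables: I would express the known product density of $\prod_j x_j^{\alpha_j-1}y_j^{\beta_j-1}e^{-(\sum x_j^2+\sum y_j^2)/2}$ in terms of $(\lambda_j,w_j)$ and compute the Jacobian of the map $(a_1,\ldots,a_{N-1})\mapsto(\lambda_1,\ldots;w_1,\ldots)$.

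First I would record the symmetry of the problem: since $\calJ$ is a tridiagonal matrix with zero diagonal, it is odd under conjugation by the signature matrix $\mathrm{diag}(1,-1,1,-1,\ldots)$, which forces its eigenvalues to come in $\pm\lambda_j$ pairs (with an extra eigenvalue at $0$ when $N$ is odd, i.e.\ the $m\ge n+1$ case). This explains the structure~\eqref{eq:spMeasDiscrete} and~\eqref{eq:spMeasDiscrete2} and, crucially, the symmetric weights $\tfrac12 w_j$ on $\pm\lambda_j$: the spectral measure is symmetric because $e_1$ is invariant under the signature conjugation. Thus the genuinely free spectral parameters are $\lambda_1,\ldots,\lambda_{\lfloor N/2\rfloor}$ together with weights $w_1,\ldots$ that sum (with $w_0$ in the odd case) to $1$, living on a simplex. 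I would next observe that the squared eigenvalues $\lambda_j^2$ are exactly the eigenvalues of $\calJ^2$, which is block-diagonal and whose nontrivial block is a Jacobi matrix built from the products $x_jy_j$, $x_{j+1}y_j$, etc.; equivalently they are singular values of $B$, so the $\lambda_j^2$ coincide with eigenvalues of a Laguerre-type matrix $B^*B$. This is the link that will reproduce the Laguerre weight $\prod\lambda_j^{\beta a+1}e^{-\lambda_j^2/2}\prod|\lambda_k^2-\lambda_j^2|^\beta$ after the substitution $s_j=\lambda_j^2$.

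The main computational step, and the step I expect to be the principal obstacle, is evaluating the Jacobian of the change of variables from the Jacobi parameters $(a_1,\ldots,a_{N-1})$ to the spectral data $(\{\lambda_j\},\{w_j\})$. The standard approach for such tridiagonal models is a Vandermonde-type factorization: the Jacobian of the map from Jacobi coefficients to the eigenvalue/first-coordinate-weight pairs factors as a product involving the Vandermonde determinant of the eigenvalues and a product of the off-diagonal entries. Here I must adapt that computation to the reflection-symmetric (even/odd dimensional anti-diagonal) setting, so the relevant Vandermonde will be in the variables $\lambda_j^2$ rather than $\lambda_j$, and the $b_j=x_jy_j$ products must be rewritten in terms of $\lambda_j,w_j$. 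I would then match the resulting Jacobian against the explicit chi-density, collect the $\Gamma$-factors, and identify them with the normalization constant $h_{\beta,s,a}$ and the Dirichlet factors $\Gamma(\beta m/2)/\Gamma(\beta/2)^m$ (respectively the $w_0$-weighted Dirichlet in the odd case, where $w_0$ carries the exponent $\beta(m-n)/2-1$ coming from the extra trivial block of dimension $m-n$). Finally I would handle the two parities separately, verifying that the exponent $\beta a+1=\beta(|n-m|+1-2/\beta)+1=\beta(|n-m|+1)-1$ emerges correctly from the combination of the Laguerre exponent and the Jacobian, and that the whole expression integrates to $1$, which both pins down $h_{\beta,s,a}$ and serves as a consistency check.
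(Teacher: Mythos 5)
Your strategy is sound and would yield the theorem, but it follows a genuinely different route from the paper's. You propose to treat $\calJ$ directly as a zero-diagonal Jacobi matrix and push the explicit $\chi$-densities of $x_j,y_j$ through the coefficients-to-spectral-data map $(a_1,\ldots,a_{N-1})\mapsto(\lambda_1,\ldots;w_1,\ldots)$, computing the Jacobian of this map in the reflection-symmetric setting (Vandermonde in the $\lambda_j^2$). That is essentially a from-scratch Dumitriu--Edelman-type computation adapted to the chiral constraint, and it is self-contained; your signature-matrix argument ($W\calJ W^*=-\calJ$ with $We_1=e_1$) for the symmetry of $\mu$ and the equal weights on $\pm\lambda_j$ is cleaner than what the paper writes. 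The paper instead avoids the Jacobian entirely: it passes to the block form $G=\left(\begin{smallmatrix}\mathbf{0}&B\\ B^*&\mathbf{0}\end{smallmatrix}\right)$, writes an eigenvector of $G$ at $\lambda_k$ as $(u^{(k)},v^{(k)})$ and at $-\lambda_k$ as $(u^{(k)},-v^{(k)})$, deduces $\|u^{(k)}\|^2=1/2$ from orthonormality, and concludes that the weight $w_k$ of $\calJ$ at $\pm\lambda_k$ \emph{equals} the weight $w_k'$ of the Laguerre matrix $BB^*$ at $\lambda_k^2$ with respect to $e_1$ (and $w_0=w_0'$ in the odd case). The already-known joint law of $(\lambda_k',w_k')$ for the $\beta$-Laguerre ensemble is then imported wholesale, and only the one-dimensional substitution $\lambda_k=\sqrt{\lambda_k'}$ remains. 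What the paper's route buys is that the hard multivariate Jacobian is never computed here -- it is hidden inside the cited Laguerre result; what your route buys is independence from that citation. The one substantive caveat is that you have only named, not executed, the central Jacobian computation, and you do not spell out how the weights of $\calJ$ relate to those of $B^*B$ versus $BB^*$ (for $m\le n$ it is $BB^*$, the $m\times m$ block containing $e_1$, whose weights are relevant); if you go your route you must actually establish the constrained-map Jacobian identity, which is exactly the step the paper's eigenvector argument is designed to circumvent.
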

\begin{proof}


Jacobi matrices~\eqref{generalJ} with non-zero $a_j$'s have simple spectrum. From this and symmetry, we then get that for  $m\leq n$, $\calJ$ has $m$  distinct positive eigenvalues $\lambda_1,\ldots, \lambda_m$ and $m$ distinct negative eigenvalues $-\lambda_1,\ldots, -\lambda_m$, so the spectral measure of $\calJ$ has form~\eqref{eq:spMeasDiscrete}.


Similarly, if $m\geq n+1$ then $\calJ$ has $n$ distinct positive eigenvalues  $\lambda_1,\ldots, \lambda_n$, $n$ distinct negative eigenvalues $-\lambda_1,\ldots, -\lambda_n$ and a simple eigenvalue at $\lambda_0:=0$. Consequently, the spectral measure of $\calJ$ has form~\eqref{eq:spMeasDiscrete2}.

Notice that the matrix 
\begin{equation}\label{eq:chiralWithB}
G=\begin{pmatrix}
	\textbf{0}_{m\times m} &B\\\
	B^*&  \textbf{0}_{n\times n}
\end{pmatrix}
\end{equation}
is unitarily equivalent to $J$: see~\eqref{ddd} and~\eqref{ddd2}. Moreover, because of ~\eqref{eq:P}, $G$ has the same spectral measure as $J$, $\calJ$.

For $k\neq 0$, we can write a normalized eigenvector of $G$  corresponding to $\lambda_k$ in the form 
\begin{equation}
	\begin{pmatrix}
		u^{(k)}\\
		v^{(k)}
	\end{pmatrix}
\end{equation}
so that 

\begin{align}
	BB^*u^{(k)}&=\lambda_k^2 u^{(k)},\\
	B^*Bv^{(k)}&=\lambda_k^2 v^{(k)}
\end{align}
are satisfied. Note that 
\begin{equation}
	\begin{pmatrix}
		u^{(k)}\\
		-v^{(k)}
	\end{pmatrix}
\end{equation}
is a normalized eigenvector of $G$ associated with $-\lambda_k$. By orthononormality of  the eigenvectors we have 

\begin{align}
&	\|u^{(k)}\|^2+\|v^{(k)}\|^2=1,\\
&	\|u^{(k)}\|^2-\|v^{(k)}\|^2=0,
\end{align}
and thus $\|u^{(k)}\|^2=1/2$. Recall~\eqref{eq:spMeasDiscrete},~\eqref{eq:spMeasDiscrete2} that denoted the eigenweight on $\lambda_k$ by $w_k/2$  (for $k\neq 0$). Then $w_k=2 |\langle u^{(k)},e_1\rangle|^2$.

For $k>0$, let
\begin{equation}\label{eig1}
	\lambda_k^\prime:=\lambda_k^2
\end{equation}
and $w_k^\prime$ be the eigenweight for $BB^*$ at $\lambda_k^\prime$. Then $\sqrt{2}u^{(k)}$ is a normalized eigenvector for  $BB^*$ corresponding to $\lambda_k^\prime$. Thus,
\begin{equation}\label{eig2}
	w_k^\prime=2 |\langle u^{(k)},e_1\rangle|^2=w_k.
\end{equation}
For the case (ii) we also have $w'_0=1-\sum_{k=1}^n w'_k=1-\sum_{k=1}^n w_k= w_0$.

Finally, recall that the joint distribution of $\{\lambda_k^\prime\}$ and $\{w_k^\prime\}$ of  $BB^*$  and of the $\beta$-Laguerre random matrix coincide (\cite{Dumede02}, \cite[Lemma 4]{Koz17}, \cite[Proposition 1]{Koz17}). 
Using \eqref{eig1}, \eqref{eig2} we can therefore write the joint distribution of the $\lambda_k$'s and $w_k$'s.
\end{proof}


\section{Jacobians}\label{ss:Jacobians}

We fix $l>0$ and for $\calJ$ as in~\eqref{generalJ} let 
\begin{align}\label{eq:Jl}
	\calJ_l & = \calJ+lI_{1\times 1}, \\
	\label{eq:Jil}
	\calJ_{il} & = \calJ+ilI_{1\times 1}.
\end{align}
In this section, we compute the Jacobian(s) of the change of variables from the spectral parameters (that is, $\lambda_j$'s and $w_j$'s) to the Maclaurin coefficients $\kappa_j$'s of the characteristic polynomial $\kappa(z)$ of $\calJ_l$ or $\calJ_{il}$.

\begin{theorem}\label{eigd2}
	
	Let $l>0$.
	\begin{enumerate}[$(i)$]
		\item Let $\calJ$ be a $2m\times 2m$ Jacobi matrix of the form \eqref{generalJ} with $a_1,\ldots,a_{2m-1}>0$ and $m>0$. Denote $\mu$ to be its spectral measure~\eqref{eq:spMeasDiscrete}.
		Let 
		\begin{equation}\label{det11}
			\det (z-\calJ_l) = \sum_{j=0}^{2m}\kappa_j z^j.
		\end{equation}
		Then
		\begin{equation}\label{dist111}
			\left\lvert \det{\frac{\partial{(\kappa_0,\ldots, \kappa_{2m-2})}}{\partial{(\lambda_1,\ldots, \lambda_{m},w_1, \ldots, w_{m-1})}}} \right\rvert=2^m l^{m-1}\prod_{j=1}^m \lambda_j \prod_{1\leq j<k\leq m} |\lambda_j^2-\lambda_k^2|^2.
		\end{equation}
		
		\item 
		Let $\calJ$ be a $(2n+1)\times  (2n+1)$ Jacobi matrix of the form \eqref{generalJ} with $a_1,\ldots,a_{2n}>0$ and $n>0$. Denote $\mu$ to be its spectral measure~\eqref{eq:spMeasDiscrete2}.
		Let 
		\begin{equation}\label{det133}
			\det (z-\calJ_l) = \sum_{j=0}^{2n+1}\kappa_j z^j.
		\end{equation}
		
		Then
		\begin{equation}\label{dist23}
			\left\lvert \det{\frac{\partial{(\kappa_0,\ldots, \kappa_{2n-1})}}{\partial{(\lambda_1,\ldots, \lambda_{n},w_1, \ldots, w_{n})}}} \right\rvert=2^n l^{n}\prod_{j=1}^n \lambda_j^3 \prod_{1\leq j<k\leq n} |\lambda_j^2-\lambda_k^2|^2.
		\end{equation}
	\end{enumerate}	
\end{theorem}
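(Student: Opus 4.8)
The plan is to write the characteristic polynomial $\kappa(z)=\det(z-\calJ_l)$ explicitly in terms of the spectral data and then exploit the chiral parity structure to factor the Jacobian into two blocks. Writing $\calJ_l=\calJ+l\,e_1e_1^*$ and applying the matrix determinant lemma gives
\[
\det(z-\calJ_l)=\det(z-\calJ)\bigl(1+l\,M(z)\bigr),\qquad M(z)=\Bigl\langle e_1,(\calJ-z)^{-1}e_1\Bigr\rangle=\int\frac{d\mu(x)}{x-z}.
\]
Since $\mu$ is supported on $\{\pm\lambda_j\}$ (plus $\{0\}$ in case (ii)), summing the Stieltjes transform in symmetric pairs collapses everything into functions of $z^2$: with $E(u):=\prod_j(u-\lambda_j^2)$ and $F(u):=\sum_j w_j\prod_{k\ne j}(u-\lambda_k^2)$ one obtains $\kappa(z)=E(z^2)-l\,z\,F(z^2)$ in case (i), and $\kappa(z)=z\,E(z^2)-l\,w_0\,E(z^2)-l\,z^2F(z^2)$ in case (ii), after substituting $w_m=1-\sum_{j<m}w_j$ (resp. $w_0=1-\sum_j w_j$).

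The key structural observation is that the even and odd powers of $z$ separate cleanly. In case (i) the even coefficients $\kappa_{2i}=[u^i]E(u)$ depend only on the $\lambda_j$, while the odd coefficients $\kappa_{2i+1}=-l\,[u^i]F(u)$ depend on both sets of variables; in case (ii) the roles of the parities are interchanged. Ordering the outputs so that the $\lambda$-only coefficients come first, the Jacobian becomes block triangular, so its determinant factors as a product of a $\lambda$-block and a $w$-block. This also explains the truncation in the statement: the top two coefficients are the constants $\kappa_{2m}=1$, $\kappa_{2m-1}=-l$ (resp. $\kappa_{2n+1}=1$, $\kappa_{2n}=-l$, using $\sum_j w_j=1$), so they carry no information and are dropped, leaving a square system.

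For the $\lambda$-block I would factor the map $\lambda_j\mapsto\lambda_j^2\mapsto$ (elementary symmetric functions). The squaring contributes a diagonal Jacobian of determinant $2^m\prod_j\lambda_j$, and the elementary symmetric map contributes the Vandermonde $\prod_{i<j}(\lambda_i^2-\lambda_j^2)$ up to sign; together this gives $2^m\prod_j\lambda_j\prod_{i<j}|\lambda_i^2-\lambda_j^2|$ (and the $2^n\cdots$ analogue in case (ii)).

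The $w$-block is the main obstacle. Here I would differentiate the coefficients coming from $F$ in the $w_j$ after eliminating the dependent weight. In case (i) this produces the matrix $[u^i]\bigl(L_j(u)-L_m(u)\bigr)$ with $L_j(u):=\prod_{k\ne j}(u-\lambda_k^2)$; the leading terms cancel, and since $(L_j-L_m)(\lambda_k^2)=\delta_{jk}V_j$ for $k\le m-1$, where $V_j=\prod_{k\ne j}(\lambda_j^2-\lambda_k^2)$, these are exactly $V_j$ times the Lagrange basis polynomials for the nodes $\lambda_1^2,\dots,\lambda_{m-1}^2$. The coefficient matrix of a Lagrange basis is the inverse Vandermonde, so its determinant is $1/\prod_{k<k'\le m-1}(\lambda_{k'}^2-\lambda_k^2)$; combining with $\prod_j V_j$ and simplifying yields precisely the second Vandermonde factor $\prod_{i<j}|\lambda_i^2-\lambda_j^2|$, together with the overall $l^{m-1}$. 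In case (ii) the analogous computation uses the identity $[u^i]E(u)-[u^{i-1}]L_j(u)=-\lambda_j^2\,[u^i]L_j(u)$ (which follows from $E=(u-\lambda_j^2)L_j$), turning the $w$-block entries into $-l\,\lambda_j^2\,[u^i]L_j(u)$; the same inverse-Vandermonde argument then supplies the extra $\prod_j\lambda_j^2$ and the Vandermonde factor, producing $l^n\prod_j\lambda_j^2\prod_{i<j}|\lambda_i^2-\lambda_j^2|$. Multiplying the two blocks gives \eqref{dist111} and \eqref{dist23}.
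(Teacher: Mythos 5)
Your proposal is correct and follows essentially the same route as the paper: the matrix determinant lemma, the even/odd parity splitting in $u=z^2$ giving a block-triangular Jacobian, the squaring-plus-Vandermonde computation for the $\lambda$-block, and the elimination of the dependent weight for the $w$-block. The only difference is that where you derive the $w$-block determinant explicitly via the Lagrange-basis/inverse-Vandermonde argument (and your identity $L_j-L_0=\lambda_j^2\tilde L_j$ in case (ii)), the paper simply cites the corresponding computation from an earlier reference; both yield the same factor $l^{m-1}\prod_{j<k}|\lambda_j^2-\lambda_k^2|$ (resp.\ $l^{n}\prod_j\lambda_j^2\prod_{j<k}|\lambda_j^2-\lambda_k^2|$).
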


\begin{proof}
	\begin{enumerate}[$(i)$]
		\item Note that $\kappa_{2m}=1$ and $\kappa_{2m-1}=-l$ are fixed constants here.
		
		Let 	$\textbf{m}(z)=\langle e_1, (\calJ-z)^{-1} e_1  \rangle $. Then
		
		\begin{equation}\label{mfun}
			\textbf{m}(z)=\sum_{j=1}^m \frac{w_j}{2}\left(\frac{1}{\lambda_j-z}+\frac{1}{-\lambda_j-z}\right)=z\sum_{j=1}^m\frac{w_j}{\lambda_j^2-z^2}.
		\end{equation}
		
		First, we observe that  		
		\begin{align}\label{det15}
			\sum_{j=0}^{2m}\kappa_j z^j&=\det (z-\calJ_l)\\
			&= \det(z-\calJ) \det (I-(z-\calJ)^{-1} l I_{1\times 1})\\
			&=(1+l \textbf{m}(z)) \prod_{j=1}^{m} (z^2-\lambda_j^2) \label{longl1}
		\end{align}
	and
	\begin{equation}\label{sss}
		l\textbf{m}(z)\prod_{j=1}^m(z^2-\lambda_j^2)=-lz\sum_{j=1}^m w_j \prod_{\substack{1\leq k \leq m\\
				k\neq j}} (z^2-\lambda_k^2).
	\end{equation}
	
		Let
		\begin{align}\label{cjdj}
			&c_j= \kappa_{2j}, \quad  j=0,\ldots,m,\\
			&d_j= \kappa_{2j+1}, \quad j=0,\ldots,m-1,
		\end{align}
	where $c_{m}=1$, $d_{m-1}=-l$.
	
	Letting $u=z^2$ and $\lambda_j^\prime=\lambda_j^2$ we get from 
	\eqref{longl1} and \eqref{sss} that 
			\begin{align}
				&\sum_{j=0}^{m} c_j u^{j}=  \prod_{j=1}^{m} (u-\lambda'_j),\label{cjdj22}\\
				&\sum_{j=0}^{m-1} d_j u^{j}=  -l\sum_{j=1}^m w_j \prod_{\substack{1\leq k \leq m \label{cjdj222}\\
						k\neq j}} (u-\lambda'_k).
			\end{align}
		
		From~\eqref{cjdj22} we get
		\begin{equation}\label{c0}
			\left\lvert \det{\frac{\partial{(c_0,\ldots, c_{m-1})}}{\partial{(\lambda_1^\prime,\ldots, \lambda_{m}^\prime)}}}\right\rvert=\prod_{1\leq  j <k\leq m}|\lambda_j^\prime-\lambda_k^\prime|= \prod_{1\leq  j <k\leq m}|\lambda_j^2-\lambda_k^2|.
		\end{equation}
		Since
		\begin{equation}
			\left\lvert \det{\frac{\partial{(\lambda_1^\prime,\ldots, \lambda_{m}^\prime)}}{\partial{(\lambda_1,\ldots, \lambda_{m})}}}\right\rvert=2^m \prod_{j=1}^m \lambda_j,
		\end{equation}
		\eqref{c0} yields
		\begin{equation}\label{c00}
			\left\lvert \det{\frac{\partial{(c_0,\ldots, c_{m-1})}}{\partial{(\lambda_1,\ldots, \lambda_{m})}}}\right\rvert=2^m \prod_{j=1}^m \lambda_j \prod_{1\leq  j <k\leq m}|\lambda_j^2-\lambda_k^2|.
		\end{equation}
		By \eqref{cjdj22},
		\begin{equation}\label{zero}
			\frac{\partial{(c_0,\ldots, c_{m-1})}}{\partial{(w_1,\ldots, w_{m-1})}}=\begin{pmatrix}\mathbf{0}_{m\times (m-1)} \end{pmatrix}.
		\end{equation}

%
		Now we consider~\eqref{cjdj222}. 
		In view of \cite[eq.(5.9), eq.(5.14)]{Koz17}, \eqref{cjdj222} implies that
		\begin{align}
			\left\lvert \det{\frac{\partial{(d_0,\ldots, d_{m-2})}}{\partial{(w_1,\ldots, w_{m-1})}}}\right\rvert&=l^{m-1} \prod_{1\leq j<k\leq m}|\lambda_j^\prime-\lambda_k^\prime|\\
			&=l^{m-1} \prod_{1\leq j<k\leq m}|\lambda_j^2-\lambda_k^2|.\label{add}
		\end{align}		
		Combining \eqref{c00}, \eqref{zero}, \eqref{add} we get
		\begin{align}
			\left\lvert \det{\frac{\partial{(\kappa_0,\ldots, \kappa_{m-2})}}{\partial{(\lambda_1,\ldots, \lambda_{m}, w_1,\ldots, w_{m-1})}}}\right\rvert&=\left\lvert \det{\frac{\partial{(c_0,\ldots, c_{m-1}, d_0, \ldots, d_{m-2})}}{\partial{(\lambda_1,\ldots, \lambda_{m}, w_1,\ldots, w_{m-1})}}}\right\rvert\\
			&=2^m l^{m-1} \prod_{j=1}^m \lambda_j \prod_{1\leq j<k\leq m}|\lambda_j^2-\lambda_k^2|^2.\label{resu1}
		\end{align}		
		
		\item Note that $\kappa_{2n+1}=1$ and $\kappa_{2n}=-l$ are constants.  We again start with 	$\textbf{m}(z)=\langle e_1, (\calJ-z)^{-1} e_1  \rangle $ which becomes
		 \begin{equation}\label{mfun1}
		 	\textbf{m}(z)=\sum_{j=1}^n \frac{w_j}{2}\left(\frac{1}{\lambda_j-z}+\frac{1}{-\lambda_j-z}\right)-\frac{w_0}{z}=z\sum_{j=1}^n\frac{w_j}{\lambda_j^2-z^2}-\frac{w_0}{z}.
		 \end{equation}
	 Now,
		\begin{align}\label{det11}
			\sum_{j=0}^{2n+1}\kappa_j z^j&=\det (z-\calJ_l)\\
			&= \det(z-\calJ) \det (I-(z-\calJ)^{-1} l I_{1\times 1})\\
			&=(1+l\textbf{m}(z)) z\prod_{j=1}^{n} (z^2-\lambda_j^2) \label{longl12}
		\end{align}
	and
	\begin{align}\label{sss2}
		lz\textbf{m}(z)\prod_{j=1}^n(z^2-\lambda_j^2)=-l\sum_{j=0}^n w_j \prod_{\substack{0\leq k \leq n.\\
				k\neq j}} (z^2-\lambda_k^2)
	\end{align}

		Define
		\begin{align}\label{cjdj3}
			c_j&= \kappa_{2j+1},\,\,\,\,\, j=0,\ldots,n,\\
			d_j&= \kappa_{2j},\,\,\,\,\, j=0,\ldots,n,
		\end{align}
	with $c_n=1$, $d_n=-l$.
	Taking $u=z^2$ and $\lambda_j^\prime=\lambda_j^2$ we get from \eqref{longl12} and \eqref{sss2} that 
		\begin{align}
			\label{c22}
			&\sum_{j=0}^{n} c_j u^{j}=  \prod_{j=1}^{n} (u-\lambda'_j), \\
			\label{eq:dpoly}
			&\sum_{j=0}^{n} d_j u^{j}=  -l\sum_{j=0}^n w_j \prod_{\substack{0\leq k \leq n\\
					k\neq j}} (u-\lambda'_k).
		\end{align}

		Using~\eqref{c22} we  get
		\begin{equation}\label{c1}
			\left\lvert \det{\frac{\partial{(c_0,\ldots, c_{n-1})}}{\partial{(\lambda_1,\ldots, \lambda_{n})}}}\right\rvert=2^n\prod_{j=1}^n\lambda_j \prod_{1\leq  j <k\leq n}|\lambda_j^2-\lambda_k^2|.
		\end{equation}
		Using~\eqref{eq:dpoly},
		\begin{align}
			\left\lvert \det{\frac{\partial{(d_0,\ldots, d_{n-1})}}{\partial{(w_1,\ldots, w_{n})}}}\right\rvert&=l^{n} \prod_{0\leq j<k\leq n}|\lambda_j^\prime-\lambda_k^\prime|\\
			&=l^{n} \prod_{j=1}^n \lambda_j^2 \prod_{1\leq j<k\leq n}|\lambda_j^2-\lambda_k^2|.\label{add2}
		\end{align}		
		Combining \eqref{c1}, \eqref{add2} we get
		\begin{align}
			\left\lvert \det{\frac{\partial{(\kappa_0,\ldots, \kappa_{2n-1})}}{\partial{(\lambda_1,\ldots, \lambda_{n}, w_1,\ldots, w_{n})}}}\right\rvert&=\left\lvert \det{\frac{\partial{(c_0,\ldots, c_{n-1}, d_0, \ldots, d_{n-1})}}{\partial{(\lambda_1,\ldots, \lambda_{n}, w_1,\ldots, w_{n})}}}\right\rvert\\
			&=2^n l^{n} \prod_{j=1}^n \lambda_j^3 \prod_{1\leq j<k\leq n}|\lambda_j^2-\lambda_k^2|^2.\label{trtr11}
		\end{align}	
	\end{enumerate}	
\end{proof}

Notice that in the case~\eqref{eq:Jl} coefficients of $\kappa$ were real, while in the case~\eqref{eq:Jil} they are real or purely imaginary.
Indeed, for a monic polynomial $\kappa(z)=\sum_{j=0}^k \kappa_j z^j$ of degree $k$ whose zeros are symmetric with respect to imaginary axis, 
$$
Q(z)=i^{k}\kappa(z/i)
$$
 is a monic polynomial with real coefficients. This means 
$\kappa(z)=Q(iz)i^{-k}$, and therefore 
$\Im \kappa_{k-2}=\Im \kappa_{k-4}=\dots=0$ and $\Re \kappa_{k-1}=\Re \kappa_{k-3}=\dots =0$. 

\begin{theorem}\label{eigd}
	Let $l>0$.
	\begin{enumerate}[$(i)$]
		\item Let $\calJ$ be a $2m\times 2m$ Jacobi matrix of the form \eqref{generalJ} with $a_1,\ldots,a_{2m-1}>0$ and $m>0$. Denote $\mu$ to be its spectral measure~\eqref{eq:spMeasDiscrete}.
		Let 
		\begin{equation}\label{det1}
			\det (z-\calJ_{il}) = \sum_{j=0}^{2m}\kappa_j z^j.
		\end{equation}
		Then
		\begin{multline}\label{dist1}
			\left\lvert \det{\frac{\partial{(\Re\kappa_0,\Im\kappa_1,\ldots,\Re\kappa_{2m-4},\Im\kappa_{2m-3},\Re\kappa_{2m-2})}}{\partial{(\lambda_1,\ldots, \lambda_{m},w_1, \ldots, w_{m-1})}}} \right\rvert
			\\ =2^m l^{m-1}\prod_{j=1}^m \lambda_j \prod_{1\leq j<k\leq m} |\lambda_j^2-\lambda_k^2|^2.
		\end{multline}
		
		\item 
		Let $\calJ$ be a $(2n+1)\times  (2n+1)$ Jacobi matrix of the form \eqref{generalJ} with $a_1,\ldots,a_{2n}>0$ and $n>0$. Denote $\mu$ to be its spectral measure~\eqref{eq:spMeasDiscrete2}.
		Let 
		\begin{equation}\label{det13}
			\sum_{j=0}^{2n+1}\kappa_j z^j=\det (z-\calJ_{il}).
		\end{equation}
		
		Then
		\begin{equation}\label{dist2}
			\left\lvert \det{\frac{\partial{(\Im\kappa_0,\Re\kappa_1,\ldots, \Im\kappa_{2n-2},\Re\kappa_{2n-1})}}{\partial{(\lambda_1,\ldots, \lambda_{n},w_1, \ldots, w_{n})}}} \right\rvert=2^n l^{n}\prod_{j=1}^n \lambda_j^3 \prod_{1\leq j<k\leq n} |\lambda_j^2-\lambda_k^2|^2.
		\end{equation}
	\end{enumerate}	
	
\end{theorem}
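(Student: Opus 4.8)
The plan is to deduce Theorem~\ref{eigd} directly from Theorem~\ref{eigd2}, exploiting that the passage from $\calJ_l$ to $\calJ_{il}$ is nothing but the formal substitution $l\mapsto il$ in the determinantal identities driving the earlier proof. First I would rerun the opening of the proof of Theorem~\ref{eigd2} verbatim with $l$ replaced by $il$. Since $I_{1\times 1}$ has rank one, the same rank-one expansion as in~\eqref{det15}--\eqref{longl1} yields, in case $(i)$,
\[
\det(z-\calJ_{il})=\bigl(1+il\,\textbf{m}(z)\bigr)\prod_{j=1}^m(z^2-\lambda_j^2),
\]
with $\textbf{m}(z)$ exactly as in~\eqref{mfun}, and the analogue of~\eqref{longl12}--\eqref{sss2} carrying an extra factor $z$ in case $(ii)$. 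These coincide with~\eqref{longl1} and~\eqref{longl12} except for the single factor $i$ now multiplying $l\,\textbf{m}(z)$.

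Next I would invoke the parity separation already used in Theorem~\ref{eigd2}. In case $(i)$ the term $\prod_{j=1}^m(z^2-\lambda_j^2)$ is a polynomial in $z^2$ (even in $z$), while $\textbf{m}(z)\prod_{j=1}^m(z^2-\lambda_j^2)$ is $z$ times a polynomial in $z^2$ (odd in $z$), cf.~\eqref{sss}. Hence the factor $i$ is carried solely by the odd-degree part: the even coefficients are unchanged, $\kappa_{2j}=c_j$ with $c_j$ as in~\eqref{cjdj} and real, whereas $\kappa_{2j+1}=i\,d_j$ with $d_j$ as in~\eqref{cjdj}, hence purely imaginary---in agreement with the reality pattern recorded in the remark after Theorem~\ref{eigd2}. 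Thus $\Re\kappa_{2j}=c_j$ and $\Im\kappa_{2j+1}=d_j$, so the tuple $(\Re\kappa_0,\Im\kappa_1,\ldots,\Re\kappa_{2m-2})$ is, after a permutation of its entries, exactly $(c_0,\ldots,c_{m-1},d_0,\ldots,d_{m-2})$, whose Jacobian against $(\lambda_1,\ldots,\lambda_m,w_1,\ldots,w_{m-1})$ was computed in~\eqref{resu1}. Permuting outputs changes only the sign of the determinant, and we take absolute values, so the right-hand side of~\eqref{dist1} equals that of~\eqref{dist111}. Case $(ii)$ is handled identically, noting that the extra factor $z$ in~\eqref{longl12} reverses the parity: the odd coefficients $\kappa_{2j+1}=c_j$ (from~\eqref{cjdj3}) are now real and the even ones $\kappa_{2j}=i\,d_j$ purely imaginary, so $\Re\kappa_{2j+1}=c_j$, $\Im\kappa_{2j}=d_j$, and $(\Im\kappa_0,\Re\kappa_1,\ldots,\Im\kappa_{2n-2},\Re\kappa_{2n-1})$ reproduces, up to a permutation, the tuple whose Jacobian is~\eqref{trtr11}.

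In this approach no genuinely new computation is required: the rank-one determinant expansion, the substitution $u=z^2$, and the two diagonal Jacobian blocks~\eqref{c00} and~\eqref{add} (resp.~\eqref{c1} and~\eqref{add2}) all transfer unchanged. The one step demanding care---and the only real pitfall---is the parity bookkeeping: I must match each $\Re$- or $\Im$-slot to the correct $c_j$ or $d_j$, check that no spurious Jacobian factor arises from taking real and imaginary parts (indeed $\Re(c_j)=c_j$ and $\Im(i\,d_j)=d_j$ exactly for real $c_j,d_j$), and confirm that the interleaved ordering of the coefficients in~\eqref{dist1} and~\eqref{dist2} differs from the block ordering of Theorem~\ref{eigd2} only by a permutation, which is harmless under the absolute value.
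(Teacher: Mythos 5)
Your proposal is correct and follows essentially the same route as the paper: the published proof simply observes that the substitution $l\mapsto il$ puts the factor $i$ on the odd-indexed coefficients in case $(i)$ (resp.\ even-indexed in case $(ii)$), so the modulus of the Jacobian is unchanged from Theorem~\ref{eigd2}. Your write-up just makes the parity bookkeeping and the harmless permutation of the coordinate tuple explicit, which is exactly the content the paper leaves implicit.
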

\begin{proof}
	The only difference from the setting in the previous theorem is that $l$ gets an extra factor of $i$, and the same happens with the coefficients $\kappa_{2j-1}$'s in (i) or $\kappa_{2j}$'s in (ii). The modulus of the Jacobian in~\eqref{dist1} and~\eqref{dist2} is therefore the same as in~\eqref{dist111} and~\eqref{dist23}, respectively.
\end{proof}


\section{Eigenvalues for rank one Hermitian perturbations}\label{ss:Hermitian}

\begin{theorem}\label{th:Hermitian}
	Let $\calJ$ belong to chG$\beta$E  $($see Section~\ref{ss:chiral}$)$, $l>0$, $a=|n-m|+1-2/\beta$, and
	\begin{equation}\label{eq:Jl2}
		\calJ_l:=\calJ+lI_{1\times 1}.
	\end{equation}
	\begin{enumerate}[$(i)$]
		\item Let $m\le n$. The eigenvalues of $\calJ_l$ are distributed on 
		\begin{align}
			\left\{(z_j)_{j=1}^{2m}: \sum_{j=1}^{2m} z_j=l,\,\,\,  \,\, z_1> -z_2>z_3>\cdots >z_{2m-1}>-z_{2m} >0 \right\}
		\end{align}
		according to 
\begin{align}
	 \frac{1}{Z_{\beta,m,a}} \, \,l^{1-\frac{m\beta}2}e^{l^2/4} \, {\prod_{j=1}^{2m}}|z_j|^{\frac{2\beta a -\beta+2}{4}}e^{-z_j^2/4}\prod_{1\leq j<k\leq 2m}|z_j-z_k|
	\prod_{j,k=1}^{2m} |z_j+z_k|^{\frac{\beta-2}{4}}
	\prod_{j=1}^{2m-1} dz_j.
	\label{real1}
\end{align}
Here
\begin{equation}\label{eq:Z}
Z_{\beta,m,a} = \frac{2^{m(\beta-2)/2} \,h_{\beta,m,a} \,[\Gamma(\beta/2)]^m}{m!  \Gamma(\beta m /2)}.
\end{equation}
		\item Let $m\ge n+1$.  The eigenvalues of $\calJ_l$ are distributed on 
		\begin{align}
			\left\{(z_j)_{j=1}^{2n+1}: \sum_{j=1}^{2n+1} z_j=l,\,\,\,  \,\,z_1> -z_2>z_3>\cdots >-z_{2n}>z_{2n+1} >0 \right\}
		\end{align}
		according to 
\begin{align}
	\frac{l^{1-\frac{m\beta}2}e^{l^2/4}}{W_{\beta,m,n,a}} \, \prod_{j=1}^{2n+1}|z_j|^{\frac{2\beta m -2\beta n-\beta-2}{4}}e^{-z_j^2/4}\prod_{1\leq j<k\leq 2n+1}|z_j-z_k| \prod_{j,k=1}^{2n+1} |{z_j}+z_k|^{\frac{\beta-2}{4}} \prod_{j=1}^{2n} dz_j. \label{real2}
\end{align}
	\end{enumerate}
	Here
	\begin{equation}\label{eq:W}
	W_{\beta,m,n,a}=\frac{2^{\frac{(2n+1)(\beta-2)}{4}} \,h_{\beta,n,a} \,[\Gamma(\beta/2)]^n\,\Gamma(\beta(m-n)/2) }{n! \Gamma(\beta m /2)  }.
	\end{equation}
\end{theorem}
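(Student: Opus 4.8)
The plan is to realize the eigenvalue law as the push-forward of the spectral density of Theorem~\ref{thmmm}$(i)$ under the composition of two changes of variables,
\[
(\lambda_1,\ldots,\lambda_m,w_1,\ldots,w_{m-1})\longmapsto(\kappa_0,\ldots,\kappa_{2m-2})\longmapsto(z_1,\ldots,z_{2m-1}),
\]
where the $\kappa_j$ are the coefficients of $\det(z-\calJ_l)$ and $z_{2m}=l-\sum_{j=1}^{2m-1}z_j$ is fixed by the trace constraint $\sum_j z_j=\operatorname{tr}\calJ_l=l$ (the diagonal of $\calJ$ vanishes). The first arrow is the inverse of the map whose Jacobian is computed in Theorem~\ref{eigd2}$(i)$, and the second is the roots-to-coefficients map restricted to the hyperplane $\{\sum_j z_j=l\}$; by Proposition~\ref{bijection2} both are bijective onto the stated ordered support. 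Writing $f_{\lambda,w}$ for the density of Theorem~\ref{thmmm}$(i)$, the target density equals
\[
f_z=m!\cdot f_{\lambda,w}\cdot\frac{\left|\det\partial(\kappa_0,\ldots,\kappa_{2m-2})/\partial(z_1,\ldots,z_{2m-1})\right|}{\left|\det\partial(\kappa_0,\ldots,\kappa_{2m-2})/\partial(\lambda_1,\ldots,\lambda_m,w_1,\ldots,w_{m-1})\right|},
\]
with the denominator supplied verbatim by~\eqref{dist111}; the symmetry factor $m!$ is explained below.

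For the numerator I would first compute the constrained roots-to-coefficients Jacobian. Eliminating $z_{2m}$ through the constraint, column $j$ of the Jacobian matrix is the coefficient vector of $-(z_j-z_{2m})\prod_{i\neq j,\,2m}(z-z_i)$; factoring $z_j-z_{2m}$ out of each column leaves the coefficient matrix of the polynomials $\prod_{i\neq j,\,2m}(z-z_i)$, whose determinant is the Vandermonde of $z_1,\ldots,z_{2m-1}$, and whose product with $\prod_{j=1}^{2m-1}|z_j-z_{2m}|$ is the full Vandermonde. Hence
\[
\left|\det\frac{\partial(\kappa_0,\ldots,\kappa_{2m-2})}{\partial(z_1,\ldots,z_{2m-1})}\right|=\prod_{1\le j<k\le 2m}|z_j-z_k|,
\]
so the full Vandermonde reappears despite the constraint, producing directly the factor $\prod_{j<k}|z_j-z_k|$ in~\eqref{real1}.

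The core of the proof is to rewrite the $\lambda$- and $w$-dependent factors of $f_{\lambda,w}$ entirely in terms of the $z_k$, for which I would use the factorization~\eqref{longl1}. Its even and odd parts in $z$ are $C(u):=\prod_j(u-\lambda_j^2)$ and $-lz\,D(u)$ with $D(u):=\sum_j w_j\prod_{i\neq j}(u-\lambda_i^2)$ where $u=z^2$, so that $\det(z-\calJ_l)\det(-z-\calJ_l)=C(u)^2-l^2u\,D(u)^2=\prod_{k=1}^{2m}(u-z_k^2)$. Three evaluations of this identity do the work. The trace relation $\operatorname{tr}\calJ_l^2=\operatorname{tr}\calJ^2+l^2=2\sum_j\lambda_j^2+l^2$ gives $\sum_k z_k^2=2\sum_j\lambda_j^2+l^2$, hence $\prod_j e^{-\lambda_j^2/2}=e^{l^2/4}\prod_k e^{-z_k^2/4}$. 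Setting $u=0$ gives $\prod_j\lambda_j^2=\prod_k|z_k|$, turning $\prod_j\lambda_j^{\beta a}$ into $\prod_k|z_k|^{\beta a/2}$. Setting $u=\lambda_i^2$ gives $\prod_k(\lambda_i^2-z_k^2)=-l^2\lambda_i^2 w_i^2\prod_{k\neq i}(\lambda_i^2-\lambda_k^2)^2$; multiplying over $i$ and using $\prod_i\prod_k(\lambda_i^2-z_k^2)=\prod_k C(z_k^2)=2^{-2m}\prod_{j,k}(z_j+z_k)$ (the last equality because $C(z_k^2)=\tfrac12\det(-z_k-\calJ_l)$ at a root $z_k$) yields the decisive identity
\[
\prod_{i<k}|\lambda_i^2-\lambda_k^2|^2\prod_i w_i=\frac{1}{2^m l^m}\left(\frac{\prod_{j,k}|z_j+z_k|}{\prod_k|z_k|}\right)^{1/2}.
\]
Raising this to the power $(\beta-2)/2$ converts $\prod_{i<k}|\lambda_i^2-\lambda_k^2|^{\beta-2}\prod_i w_i^{\beta/2-1}$ into $\prod_k|z_k|^{-(\beta-2)/4}\prod_{j,k}|z_j+z_k|^{(\beta-2)/4}$ with the correct powers of $2$ and $l$; combined with $\prod_k|z_k|^{\beta a/2}$ this reproduces the exponent $(2\beta a-\beta+2)/4$ of $|z_k|$ and the factor $\prod_{j,k}|z_j+z_k|^{(\beta-2)/4}$ in~\eqref{real1}.

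I expect the decisive identity to be the main obstacle: extracting $\prod_i w_i\prod_{i<k}(\lambda_i^2-\lambda_k^2)^2$ from the characteristic polynomial, and controlling signs so that only $|z_j+z_k|$ and $|z_k|$ survive (the ordering of Proposition~\ref{bijection2} pins these signs down). The symmetry factor $m!$ then enters because $f_{\lambda,w}$ is symmetric in $\lambda_1,\ldots,\lambda_m$ while the $z_k$ are strictly ordered, so each ordered configuration has $m!$ equally weighted preimages; this is exactly the $m!$ in the denominator of $Z_{\beta,m,a}$ in~\eqref{eq:Z}. Collecting the remaining powers of $l$ (namely $l^{-(m-1)}$ from~\eqref{dist111} and $l^{-m(\beta-2)/2}$ from the decisive identity, giving $l^{1-m\beta/2}$) and the Gamma factors from $h_{\beta,m,a}$ and $\Gamma(\beta m/2)/\Gamma(\beta/2)^m$ then verifies~\eqref{eq:Z}. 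Case $(ii)$ runs in the same way with $m$ replaced by $n$: the odd matrix size forces a zero eigenvalue $z=0$ carrying the spectral weight $w_0$, one uses the Jacobian~\eqref{dist23} and the density~\eqref{eq:spMeasDiscrete2}, and the $n!$ symmetrization reproduces the normalization $W_{\beta,m,n,a}$ in~\eqref{eq:W}.
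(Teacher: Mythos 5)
Your proposal is correct and follows essentially the same route as the paper: push the spectral density of Theorem~\ref{thmmm} through the coefficient map using the Jacobians of Theorem~\ref{eigd2} and the roots-to-coefficients Vandermonde (reduced by the trace constraint), then convert the $\lambda,w$-dependent factors to $z$-variables via the even/odd decomposition of $\det(z-\calJ_l)$. The only cosmetic differences are that you obtain $\sum_k z_k^2$ from $\operatorname{tr}\calJ_l^2$ rather than from $\kappa_{2m-2}$, and you package the weight identity as $C(u)^2-l^2uD(u)^2=\prod_k(u-z_k^2)$ evaluated at $u=\lambda_i^2$ instead of the paper's residue computation for $\mathbf{m}(z)$ at $\pm\lambda_j$ — these are equivalent and yield the same decisive identity \eqref{summ}.
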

\begin{remarks}
	1.  1. As a corollary, eigenvalues of  Hermitian perturbations of chGOE, chGUE, chGSE (see Proposition~\ref{pr:Model1}) are ~\eqref{real1} together with $z=0$ of algebraic multiplicity $n-m$ (for the case $m\le n$), and ~\eqref{real2} together with $z=0$ of algebraic multiplicity $m-n-1$ (for the case $m\ge n+1$).
	
	2. See the end of this section for the case when $l$ is not deterministic but random.
\end{remarks}

\begin{proof}
	\begin{enumerate}[$(i)$]
		\item Let $\sum_{j=0}^{2m} \kappa_j z^j= \det(z-\calJ_l).$ Then
		
		\begin{align}
			\prod_{1\leq j<k\leq 2m}|z_j-z_k|&=	\left\lvert \det{\frac{\partial{(\kappa_0,\ldots, \kappa_{2m-1})}}{\partial{(z_1,\ldots, z_{2m})}}}\right\rvert \label{a1}\\
			&=	\left\lvert \det{\frac{\partial{(\kappa_0,\ldots, \kappa_{2m-1})}}{\partial{(z_1,\ldots,z_{2m-1}, \kappa_{2m-1)}}}}\right\rvert \label{a2}\\
			&=\left\lvert \det{\frac{\partial{(\kappa_0,\ldots, \kappa_{2m-2})}}{\partial{(z_1,\ldots, z_{2m-1})}}}\right\rvert\label{a3}.
		\end{align}		
		The equality \eqref{a1} is well known, \eqref{a2} is a result of $\sum_{j=1}^{2m}z_j=-\kappa_{2m-1}$
		 and \eqref{a3} follows by removing the last row and column from the determinant~\eqref{a2}.

		Combining  part $(i)$ of  Theorem \ref{thmmm}, \eqref{a3} and \eqref{dist111} we get the density of $dz_1\cdots dz_{2m-1}$:
		\begin{align}
			m! \frac{l^{1-m}\displaystyle\prod_{1\leq j<k\leq 2m}|z_j-z_k|}{h_{\beta,m,a}\displaystyle\prod_{1\leq j<k\leq m}|\lambda_j^2-\lambda_k^2|^2}&\prod_{j=1}^m\lambda_j^{\beta a}e^{-\lambda_j^2/2}\prod_{1\leq j<k\leq m}|\lambda_k^2-\lambda_j^2|^\beta   \nonumber     \\
			&\times  \Gamma(\beta m/2)\prod_{j=1}^m \frac{w_j^{\beta/2-1}}{\Gamma(\beta/2)}\label{bigdist22}.
		\end{align}
		Notice the extra factor of $m!$ that comes from the fact that $\lambda_j$'s were not ordered while $z_j$'s are. 
		
		It follows from \eqref{det15}, \eqref{cjdj22} that 
		\begin{equation}\label{yt1}
			\sum_{j=1}^m \lambda_j^2=-c_{m-1}=-\kappa_{2m-2}=-\sum_{1\leq i<j\leq 2m}z_i z_j. 
		\end{equation}
		Since $\sum_{j=1}^{2m} z_j=l$, we have 
		\begin{align}
			l^2&= \sum_{j=1}^{2m} z_j^2+2\sum_{{1\leq i<j\leq 2m}}z_i z_j \label{yt2}\\
			&=\sum_{j=1}^{2m} z_j^2-2\sum_{j=1}^{m} \lambda_j^2.\label{yt3}
		\end{align}
		Thus
		\begin{equation}\label{pp1}
			\sum_{j=1}^{m} \lambda_j^2=\frac{-l^2+\sum_{j=1}^{2m} z_j^2}{2}.
		\end{equation}
		It follows from \eqref{det15}, \eqref{cjdj22} that 
		\begin{equation}\label{pp2}
			\prod_{j=1}^m \lambda_j^2= |c_0| = |\kappa_0|= \prod_{j=1}^{2m} |z_j| .
		\end{equation}
		By \eqref{longl1}, we have
		\begin{align}\label{wjs}
			\displaystyle	 \frac{w_j}{2}=\left\lvert  \mathrm{Res}_{z=\lambda_j} \textbf{m}(z) \right\rvert =\left\lvert \mathrm{Res}_{z=\lambda_j}\frac{\prod_{k=1}^{2m} (z-z_k)}{l\prod_{k=1}^{m}       
				(z^2-\lambda_k^2)}\right\rvert=\left\lvert  \frac{\prod_{k=1}^{2m} (\lambda_j-z_k)}{2l\lambda_j \prod_{\substack{1\leq k \leq m\\
						k\neq j}}  (\lambda_k^2-\lambda_j^2) }\right\rvert.
		\end{align}
		Similarly,
		\begin{align}\label{wjs2}
			\displaystyle	 \frac{w_j}{2}=\left\lvert  \mathrm{Res}_{z=-\lambda_j} \textbf{m}(z) \right\rvert =\left\lvert \mathrm{Res}_{z=-\lambda_j}\frac{\prod_{k=1}^{2m} (z-z_k)}{l\prod_{k=1}^{m}       
				(z^2-\lambda_k^2)}\right\rvert=\left\lvert  \frac{\prod_{k=1}^{2m} (\lambda_j+z_k)}{2l\lambda_j \prod_{\substack{1\leq k \leq m\\
						k\neq j}}  (\lambda_k^2-\lambda_j^2) }\right\rvert.
		\end{align}
		
		By \eqref{cjdj22} 
		\begin{equation}\label{split}
			\prod_{k=1}^m (z^2-\lambda_k^2)=\sum_{j=0}^m \kappa_{2j}z^{2j}=\frac{1}{2}\prod_{k=1}^{2m}(z-z_k)+\frac{1}{2}\prod_{k=1}^{2m}(z+z_k)
		\end{equation}
		is satisfied.

		Letting $z=z_1,\ldots,z_{2m}$ in \eqref{split} yields
		\begin{equation}\label{sum}
				\prod_{\substack{k=1,\ldots, 2m\\
					j=1,\ldots, m}}  |z_k^2-\lambda_j^2| =		\frac{1}{4^m}\prod_{k,j=1}^{2m}  |z_j+z_k| .
		\end{equation}
		Combining \eqref{wjs}, \eqref{wjs2} , and \eqref{sum}, and  we get
		
		\begin{equation}\label{summ}
			\prod_{j=1}^{m} w_j^2= 
				 \frac{   \prod_{k,j=1}^{2m}  |z_j+z_k| }{l^{2m} 4^m \prod_{j=1}^m  \lambda_j^2   \prod_{1\leq  j<k\leq m} |\lambda_k^2-\lambda_j^2 |^4  } 
		\end{equation}
		Substituting \eqref{summ}, \eqref{pp1}, \eqref{pp2} into \eqref{bigdist22} we obtain \eqref{real1}.

		\item 
		Let $\sum_{j=0}^{2n+1} \kappa_j z^j=\det(z-\calJ_l).$   By a similar argument as in (i), we see that 
		
		\begin{align}
			\prod_{1\leq j<k\leq 2n+1}|z_j-z_k|=\left\lvert \det{\frac{\partial{(\kappa_0,\ldots, \kappa_{2n-1})}}{\partial{(z_1,\ldots, z_{2n})}}}\right\rvert\label{b1}
		\end{align}		
		and
		\begin{equation}\label{rr1}
			\sum_{j=1}^{n} \lambda_j^2 =\frac{-l^2+\sum_{j=1}^{2n+1} z_j^2}{2}.
		\end{equation}
		
		Using  part $(ii)$ in Theorem \ref{thmmm}, \eqref{b1} and \eqref{dist23}, we find the distribution of the $z_j$'s:
		
		\begin{align}
			&n! \frac{\prod_{1\leq j<k\leq 2n+1}|z_j-z_k|}{2^n l^{n}\prod_{j=1}^n \lambda_j^3 \prod_{1\leq j<k\leq n} |\lambda_j^2-\lambda_k^2|^2}\frac{2^n\prod_{j=1}^n\lambda_j}{h_{\beta,n,a}}\prod_{j=1}^n\lambda_j^{\beta a}e^{-\lambda_j^2/2} \nonumber\\
			&\times \prod_{1\leq j<k\leq n}|\lambda_k^2-\lambda_j^2|^\beta    \times  \frac{w_0^{\beta(m-n)/2-1}}{\Gamma{(\beta(m-n)/2)}} \times\Gamma(\beta m/2)\prod_{j=1}^n\frac{w_j^{\beta/2-1}}{\Gamma(\beta/2)}\nonumber\\
			& \times  dz_1\cdots dz_{2n}. \label{bbb9}
		\end{align}

		It follows from \eqref{longl12} that
		\begin{align}\label{tunn}
			\displaystyle w_0=\left\lvert  \mathrm{Res}_{z=0} \textbf{m}(z) \right\rvert =\left\lvert \mathrm{Res}_{z=0}\frac{\prod_{k=1}^{2n+1} (z-z_k)}{lz\prod_{k=1}^{n}       
				(z^2-\lambda_k^2)}\right\rvert=\left\lvert  \frac{\prod_{k=1}^{2n+1} z_k}{l \prod_{k=1}^n \lambda_k^2}   \right\rvert.
		\end{align}
		
		Similarly,
		\begin{align}\label{las2}
			\displaystyle	\frac{w_j}{2}=\left\lvert  \mathrm{Res}_{z=\lambda_j} \textbf{m}(z) \right\rvert =\left\lvert \mathrm{Res}_{z=\lambda_j}\frac{\prod_{k=1}^{2n+1} (z-z_k)}{lz\prod_{k=1}^{n}       
				(z^2-\lambda_k^2)}\right\rvert&=\left\lvert  \frac{\prod_{k=1}^{2n+1} (\lambda_j-z_k)}{2l\lambda_j^2 \prod_{\substack{1\leq k \leq n\\
						k\neq j}}  (\lambda_k^2-\lambda_j^2) }\right\rvert\\
		\label{las3}
		=\left\lvert  \mathrm{Res}_{z=-\lambda_j} \textbf{m}(z) \right\rvert &=\left\lvert  \frac{\prod_{k=1}^{2n+1} (\lambda_j+z_k)}{2l\lambda_j^2 \prod_{\substack{1\leq k \leq n\\
						k\neq j}}  (\lambda_k^2-\lambda_j^2) }\right\rvert		
					.
		\end{align}
		By \eqref{longl12} and \eqref{sss2}
		
		\begin{equation}\label{las1}
			z	\prod_{k=1}^n (z^2-\lambda_k^2)=\frac{1}{2}\prod_{k=1}^{2n+1}(z-z_k)+\frac{1}{2}\prod_{k=1}^{2n+1}(z+z_k).
		\end{equation}
		Letting $z=z_1,\ldots,z_{2n+1}$ in \eqref{las1} implies
		
		\begin{equation}\label{tum}
				\prod_{\substack{k=1,\ldots, 2n+1\\
					j=1,\ldots, n}}  |z_k^2-\lambda_j^2| =		\frac{\prod_{k,j=1}^{2n+1} |z_j+z_k|}{2^{2n+1}\prod_{k=1}^{2n+1} |z_k|} .
		\end{equation}
		Combining \eqref{tum}, \eqref{las2}, \eqref{las3}, we obtain
		
		\begin{equation}\label{tumm}
			\prod_{j=1}^{n} w_j^2= 
				 \frac{   \prod_{k,j=1}^{2n+1}  |z_j+z_k| }{l^{2n} 2^{2n+1} \prod_{j=1}^{2n+1} |z_j| \prod_{j=1}^n  \lambda_j^4   \prod_{1\leq  j<k\leq n} |\lambda_k^2-\lambda_j^2 |^4  }.
		\end{equation}
		Substituting \eqref{rr1}, \eqref{tunn}, \eqref{tumm} into \eqref{bbb9}, we get \eqref{real2}.
		
	\end{enumerate}
\end{proof}
It is natural to choose $l$ to be random and independent of $\calJ$. For example, let $l$ be $\sqrt{2} \chi_{\beta m/2}$-distributed, i.e., with probability distribution
$$
F(l) \, dl= \frac{1}{2^{\beta m/2-1}\Gamma(\beta m/4)} l^{\frac{m\beta}{2}-1} e^{-l^2/4} \, dl
$$
 on $(0,\infty)$. Then making an extra change of variables from $\{z_1,\ldots,z_{k-1},l\}$ to $\{z_1,\ldots,z_{k}\}$, we arrive at the following joint distribution of eigenvalues:
 \begin{itemize}
 	\item[(i)] If $m\le n$, then eigenvalues of $\calJ_l$ are distributed on
 			\begin{equation}\label{eq:confSpace}
 				\left\{(z_j)_{j=1}^{2m}: \,\,\,  \,\, z_1> -z_2>z_3>\cdots >z_{2m-1}>-z_{2m} >0 \right\}
 			\end{equation}
 			according to 
 			\begin{equation}
 				\frac{1}{\tilde{Z}_{\beta,m,a}} \, \,\, {\prod_{j=1}^{2m}}|z_j|^{\frac{2\beta a -\beta+2}{4}}e^{-z_j^2/4}\prod_{1\leq j<k\leq 2m}|z_j-z_k|
 				\prod_{j,k=1}^{2m} |z_j+z_k|^{\frac{\beta-2}{4}}
 				\prod_{j=1}^{2m} dz_j.
 			\end{equation}
 			Here
 			\begin{equation}
 				\tilde{Z}_{\beta,m,a} = \frac{2^{m \beta-m-1} \,h_{\beta,m,a} \Gamma(\beta m/4)\,[\Gamma(\beta/2)]^m}{m!  \Gamma(\beta m /2)}.
 			\end{equation}
 			For $\beta=2$ this takes an especially simple form
 			\begin{equation}\label{eq:notPfaff}
 				\frac{1}{\tilde{Z}_{2,m,|n-m|}} \, \,\, {\prod_{j=1}^{2m}}|z_j|^{|n-m|}e^{-z_j^2/4}\prod_{1\leq j<k\leq 2m}|z_j-z_k|
 				\prod_{j=1}^{2m} dz_j.
 			\end{equation}
 		At first sight one might expect  that~\eqref{eq:notPfaff} has a Pfaffian structure but recall the configuration space is~\eqref{eq:confSpace} which complicates analysis substantially.
 		
 		\item[(ii)] If $m\ge n+1$, then eigenvalues of $\calJ_l$ are distributed on 
 		\begin{align}
 			\left\{(z_j)_{j=1}^{2n+1}: \,\,\,  \,\,z_1> -z_2>z_3>\cdots >-z_{2n}>z_{2n+1} >0 \right\}
 		\end{align}
 		according to 
 		\begin{align}
 			\frac{1}{\tilde{W}_{\beta,m,n,a}} \, \prod_{j=1}^{2n+1}|z_j|^{\frac{2\beta m -2\beta n-\beta-2}{4}}e^{-z_j^2/4}\prod_{1\leq j<k\leq 2n+1}|z_j-z_k| \prod_{j,k=1}^{2n+1} |{z_j}+z_k|^{\frac{\beta-2}{4}} \prod_{j=1}^{2n+1} dz_j. 
 		\end{align}
 Here
\begin{equation}
\tilde{W}_{\beta,m,n,a}=\frac{2^{\frac{(2n+1)(\beta-2)}{4}+\frac{\beta m}{2}-1} \,h_{\beta,n,a} \,\Gamma(\beta m/4) [\Gamma(\beta/2)]^n\,\Gamma(\beta(m-n)/2) }{n! \Gamma(\beta m /2)  }.
\end{equation}
For $\beta=2$ this becomes
\begin{align}
	\frac{1}{\tilde{W}_{2,m,n,|m-n|}} \, \prod_{j=1}^{2n+1}|z_j|^{ m -n-1}e^{-z_j^2/4}\prod_{1\leq j<k\leq 2n+1}|z_j-z_k|  \prod_{j=1}^{2n+1} dz_j. 
\end{align}
 \end{itemize}


\section{Eigenvalues for rank one non-Hermitian perturbations}\label{ss:nonHermitian}

Let $\calJ$ be an $N\times N$ random matrix from chG$\beta$E, and consider
\begin{equation}\label{eq:Jil2}
	\calJ_{il}:=\calJ+ilI_{1\times 1} 
\end{equation}
for some $l>0$.

In order to simplify the final answer we will assume $l$ to be random, independent from $\calJ$ (or $H$ for $\beta=1,2,4$) with absolutely continuous distribution $F(l) \,dl$ with $F(l)>0$ for $l>0$ and $0$ otherwise. Other distributions of $l$ (or the deterministic case) can also be treated in the exact same manner, and we  leave it as an exercise to an interested reader.

As we discussed in Proposition~\ref{bijection}, eigenvalues of~\eqref{eq:Jil2}  belong to $\mathbb{C}_+$, and they are symmetric with respect to the imaginary axis. The set of all possible configurations $\{z_j\}_{j=1}^N$ of these eigenvalues, therefore, decomposes as the disjoint union
$$
X_N:=\bigcup_{\stackrel{L\ge 0, M\ge0}{L+2M=N}} X_{L,M},
$$
where
\begin{multline}
X_{L,M}:=
\Big\{
\{z_j\}_{j=1}^N \in\mathbb{C}_+^N:  z_1,\ldots,z_L \in i\bbR_+; \\
 z_{L+1} = - \bar{z}_{L+1+M}, \ldots, z_{L+M} = -\bar{z}_{L+2M}
\Big\}.
\end{multline}
For each $z_j$, let $z_j=x_j + i y_j$, $x_j,y_j\in\bbR$.

We will say that $\{z_j\}_{j=1}^N$ on $X_N$ have joint distribution $f(z_1,\ldots,z_N) \big| \bigwedge_{j=1}^N z_j \big|$ (with $f$ being invariant under permutation of its arguments), if conditionally on the event $\{z_j\}_{j=1}^N \in X_{L,M}$ the distribution becomes 
\begin{multline}\label{eq:wedges}
	2^M \frac{1}{ L! M!2^M} f(iy_1,\ldots,iy_L,\pm x_{L+1}+iy_{L+1},\pm x_{L+2}-i y_{L+2},\ldots,\pm x_{L+M}+i y_{L+M})
	\\
	\times  \prod_{j=1}^L \,dy_j  \prod_{j=L+1}^{L+M} (dx_{j} dy_{j}) .
\end{multline}
Here the factor $\tfrac{1}{ L! M!2^M}$ corresponds to the number of permutations on $X_{L,M}$ that preserve the configuration, and $2^M$ comes from $\big| dz \wedge d(-\bar{z}) \big| = 2 dx\,dy$.

For a more formal introduction to such point processes, we refer the reader to \cite{boro}.

%
%
%

\begin{theorem}\label{th:nonHermitian}
	Let $\calJ$ belong to chG$\beta$E  $($see Section~\ref{ss:chiral}$)$, $a=|n-m|+1-2/\beta$, 
	\begin{equation}
		\calJ_{il}:=\calJ+ilI_{1\times 1},
	\end{equation}
where $l$ is independent of $\calJ$ with distribution $F(l)dl$, $F(l)>0$ for $l>0$ and $0$ otherwise.
	\begin{enumerate}[$(i)$]
		\item Let $m\le n$.
		Then $\{z_j\}_{j=1}^{2m}$ are jointly distributed on $X_{2m}$ according to
\begin{multline}
	\frac{1}{Z_{\beta,m,a}} \,F(l)  l^{1-\frac{m\beta}2} e^{-l^2/4} {\prod_{j=1}^{2m}}|z_j|^{\frac{2\beta a -\beta+2}{4}}e^{-z^2_j/4}
	\\
	\times \prod_{1\leq j<k\leq 2m}|z_j-z_k|
	  \prod_{j,k=1}^{2m} |{z_j}-\bar{z}_k|^{\frac{\beta-2}{4}} \, \Big|\bigwedge_{j=1}^{2m} dz_j \Big|,
	\label{www1}
\end{multline}
	where $l=\big|\sum_{j=1}^{2m} z_j\big|$ and $Z_{\beta,m,a}$ is~\eqref{eq:Z}.
		\item Let $m\ge n+1$. Then $\{z_j\}_{j=1}^{2n+1}$ are jointly distributed on $X_{2n+1}$ according to
\begin{multline}
	 \frac{1}{W_{\beta,m,n,a}} \, F(l) l^{1-\frac{m\beta}2} e^{-l^2/4}  \prod_{j=1}^{2n+1}|z_j|^{\frac{2\beta m -2\beta n-\beta-2}{4}}e^{-z_j^2/4}	  
	\\
	\,\times \prod_{1\leq j<k\leq 2n+1}|z_j-z_k|
	\prod_{j,k=1}^{2n+1} |{z_j}-\bar{z}_k|^{\frac{\beta-2}{4}}   \Big|\bigwedge_{j=1}^{2n+1} dz_j \Big|,
	\label{www2}
\end{multline}
	 where $l=\big|\sum_{j=1}^{2n+1} z_j\big|$ and $W_{\beta,m,n,a}$ is~\eqref{eq:W}.
	\end{enumerate}
	
\end{theorem}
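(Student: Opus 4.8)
The plan is to follow the proof of Theorem~\ref{th:Hermitian} almost verbatim, replacing the real rank-one shift $l$ by the imaginary shift $il$, and to track the few places where this substitution changes the outcome. By Proposition~\ref{bijection} the eigenvalues $z_1,\ldots,z_N$ of $\calJ_{il}$ lie in $\bbC_+$ and are symmetric about the imaginary axis; since $\calJ$ has zero diagonal, $\sum_{j=1}^N z_j = -\kappa_{N-1} = il$, which gives $l=\big|\sum_j z_j\big|$ as claimed. The remark following Theorem~\ref{eigd2} shows that the even-indexed $\kappa_{2j}$ are real and the odd-indexed ones purely imaginary; this fixes the real coordinate system on characteristic polynomials used in Theorem~\ref{eigd}. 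I start from the joint law of $(\lambda_1,\ldots,\lambda_m,w_1,\ldots,w_{m-1})$ given by Theorem~\ref{thmmm}, multiplied by the independent density $F(l)\,dl$.

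First I factor the change of variables $(\lambda,w,l)\mapsto(z_1,\ldots,z_N)$ through the real coordinates of the characteristic polynomial, exactly as in \eqref{a1}--\eqref{a3}. Because $l=-\Im\kappa_{N-1}$ is itself one of these coordinates, the Jacobian of $(\lambda,w)\mapsto(\Re\kappa_0,\Im\kappa_1,\ldots)$ at fixed $l$ is supplied directly by Theorem~\ref{eigd}, whose modulus agrees with the Hermitian one since passing from $l$ to $il$ merely multiplies the odd coefficients by $i$. The remaining factor is the Jacobian from the $\kappa$-coordinates to the roots $z_j$, namely the Vandermonde $\prod_{1\le j<k\le N}|z_j-z_k|$.

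The substitution of the spectral quantities is then algebraic and, in modulus, identical to the Hermitian computation, because $\textbf{m}(z)=\langle e_1,(\calJ-z)^{-1}e_1\rangle$ is unchanged and the shift enters only through $|il|=l$. Thus $\prod_j\lambda_j^2=|\kappa_0|=\prod_j|z_j|$; from $\sum_{i<j}z_iz_j=\kappa_{N-2}=-\sum_j\lambda_j^2$ and $\big(\sum_j z_j\big)^2=(il)^2=-l^2$ one gets $\sum_j\lambda_j^2=\tfrac12\big(l^2+\sum_j z_j^2\big)$, so that $e^{-\sum_j\lambda_j^2/2}=e^{-l^2/4}\prod_j e^{-z_j^2/4}$ --- note the sign of $l^2$ is opposite to the Hermitian case, producing $e^{-l^2/4}$ in \eqref{www1} instead of $e^{l^2/4}$. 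The residue computation \eqref{wjs}--\eqref{sum}, with $il$ in place of $l$, reproduces $\prod_j w_j^2$ with the same powers of $l$ and the same factor $\prod_{j,k}|z_j+z_k|$; the configuration symmetry $\{z_k\}=\{-\bar z_k\}$ then lets me rewrite $\prod_{j,k}|z_j+z_k|=\prod_{j,k}|z_j-\bar z_k|$, yielding the stated interaction. Collecting the powers of $l$, the Gaussian factor, $\prod_j|z_j|$, the Vandermonde, and $\prod_{j,k}|z_j-\bar z_k|$ gives \eqref{www1}; part (ii) is identical once the weight $w_0$ and the odd degree $N=2n+1$ are handled as in Theorem~\ref{th:Hermitian}(ii).

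The genuinely new point, and the step I expect to be the main obstacle, is turning the complex Vandermonde $\prod_{j<k}|z_j-z_k|$ into an honest real density on the configuration space $X_{L,M}$. There the coefficient map is a holomorphic map restricted to a totally real slice, and its real Jacobian must be expressed in the coordinates $y_j$ for the purely imaginary eigenvalues and $(x_j,y_j)$ for the conjugate-type pairs. This is exactly where the factors in \eqref{eq:wedges} arise: each pair contributes $\big|dz\wedge d(-\bar z)\big|=2\,dx\,dy$, giving the $2^M$, while the permutations preserving a configuration give the $\tfrac{1}{L!M!2^M}$. Checking that these combinatorial factors assemble so that the normalization is \emph{exactly} $Z_{\beta,m,a}$, rather than some $L,M$-dependent constant, is the delicate bookkeeping; once it is done, the remainder is a transcription of the Hermitian argument under $l\mapsto il$.
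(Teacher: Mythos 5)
Your proposal is correct and follows essentially the same route as the paper's proof: push the spectral law of Theorem~\ref{thmmm} through the coefficient Jacobians of Theorem~\ref{eigd} and then to the roots, tracking the sign flip $\big(\sum_j z_j\big)^2=-l^2$ (hence $e^{-l^2/4}$) and the replacement of $\prod_{j,k}|z_j+z_k|$ by $\prod_{j,k}|z_j-\bar z_k|$ via the symmetry $\{z_k\}=\{-\bar z_k\}$. The one step you flag as the main obstacle --- the real Jacobian from the coefficient coordinates to $(y_1,\ldots,y_L,x_{L+1},y_{L+1},\ldots,x_{L+M},y_{L+M})$ on $X_{L,M}$, equal to $2^M\prod_{j<k}|z_j-z_k|$ --- is exactly what the paper imports from \cite[Lemma 6.5]{KK}, after which that $2^M$ cancels against the one built into \eqref{eq:wedges} and the density is $L,M$-independent as you anticipate.
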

\begin{remarks}
	1. As a corollary, eigenvalues of  non-Hermitian perturbations of chGOE, chGUE, chGSE (see Proposition~\ref{pr:Model2}) are ~\eqref{www1} together with $z=0$ of algebraic multiplicity $n-m$ (for the case $m\le n$), and ~\eqref{www2} together with $z=0$ of algebraic multiplicity $m-n-1$ (for the case $m\ge n+1$).
	
	2. Even though $z_j$'s are in $\bbC_+$, because of the symmetry $\sum z_j^2 = \sum \Re(z_j^2)$ is a real quantity.
	
\end{remarks}
\begin{proof}
	\begin{enumerate}[$(i)$]
		\item  
		Recall the characteristic polynomial $\kappa(z)$ in~\eqref{det1} and that 
		$$
		Q(z)=i^{N}\kappa(z/i)
		$$
		is a monic polynomial with real coefficients and zeros at $\{iz_j\}_{j=1}^{2m}$. 
		
		Let us assume that $z_j$'s belong to $X_{L,M} \subset X_{2m}$.	Using~\cite[Lemma 6.5]{KK} (if one applies it to $Q$), we get 
		\begin{align}\label{vande}
			\left\lvert \det{\frac{\partial{(\Re\kappa_0,\Im\kappa_1,\ldots,\Im\kappa_{2m-3},\Re\kappa_{2m-2},\Im\kappa_{2m-1})}}{\partial{(y_1,\ldots, y_L ,x_{L+1},y_{L+1},\ldots, x_{L+M},y_{L+M})}}}\right\rvert=2^M \prod_{1\leq j<k\leq 2m}|z_j-z_k|.
		\end{align}		
		
		
		Combining \eqref{vande} with  \eqref{dist1} and $\kappa_{2m-1}=-il$ we obtain
		\begin{align}
			\left\lvert \det{\frac{\partial{(\lambda_1,\ldots, \lambda_{m},w_1, \ldots, w_{m-1},l)}} {\partial{(y_1,\ldots, y_L ,x_{L+1},y_{L+1},\ldots, x_{L+M},y_{L+M})}}   } \right\rvert
			\\ = 2^{M-m} \frac{\prod_{1\leq j<k\leq 2m}|z_j-z_k|}{ l^{m-1}\prod_{j=1}^m \lambda_j \prod_{1\leq j<k\leq m} |\lambda_j^2-\lambda_k^2|^2}.
		\end{align}	
	
		Now we use this Jacobian together with Theorem \ref{thmmm}(i) we obtain the  joint density of $x_j$'s and $y_j$'s:
		\begin{multline}\label{eq:intermediateDensity}
			\frac{m!}{2^M M!L!}\frac{2^M}{h_{\beta,m,a}} \prod_{1\leq j<k\leq 2m}|z_j-z_k|
			\prod_{j=1}^m\lambda_j^{\beta a}e^{-\lambda_j^2/2}\prod_{1\leq j<k\leq m}|\lambda_k^2-\lambda_j^2|^{\beta-2}           
			\\
			\times  \Gamma(\beta m/2)\prod_{j=1}^m \frac{w_j^{\beta/2-1}}{\Gamma(\beta/2)} l^{1-m}F(l) 
			\times  \prod_{j=1}^L \,dy_j  \prod_{j=L+1}^{L+M} (dx_{j} dy_{j}). 
		\end{multline}
	Notice the extra factor of $\frac{1}{2^M M!L!}$ since we do not impose ordering on our $z_j$'s so each configuration appears ${2^M M!L!}$ times. Similarly, $m!$ comes from the absence of ordering in $\lambda_j$'s.
		
	Using~\eqref{longl1},
\begin{equation}\label{mfun2}
	\sum_{j=0}^{2m} \kappa_j z^j=(1+il \textbf{m}(z))\prod_{j=1}^m (z^2-\lambda_j^2).
\end{equation}
Here
\begin{align}
	\kappa_{2m}&=1,\\
	\Im \kappa_{2j}&=0,\,\,\,\,\,\, j=0,\ldots,m-1,\\
	\Re \kappa_{2j+1}&=0,\,\,\,\,\,\, j=0,\ldots,m-1,\label{re}
\end{align}
and
\begin{equation}\label{re1}
	\sum_{j=0}^{2m} \Re \kappa_j z^j=\prod_{j=1}^m (z^2-\lambda_j^2).
\end{equation}

	It follows  that 
	\begin{equation}\label{rpp2}
		\prod_{j=1}^m \lambda_j^2= |\Re \kappa_0| = |\kappa_0|= \prod_{j=1}^{2m} |z_j|.
	\end{equation}
and
\begin{equation}\label{yt1}
	\sum_{j=1}^m \lambda_j^2=-\kappa_{2m-2}=-\sum_{1\leq i<j\leq 2m}z_i z_j. 
\end{equation}
Since $\sum_{j=1}^{2m} z_j=\operatorname{Tr}(\calJ_{il}) = il$, we have 
\begin{align}
	-l^2&= \sum_{j=1}^{2m} z_j^2+2\sum_{{1\leq i<j\leq 2m}}z_i z_j. \label{zt2}\\
	&=\sum_{j=1}^{2m} z_j^2-2\sum_{j=1}^{m} \lambda_j^2.\label{zt3}
\end{align}
Thus
\begin{equation}\label{ppp1}
	\sum_{j=1}^{m} \lambda_j^2=\frac{l^2+\sum_{j=1}^{2m} z_j^2}{2}.
\end{equation}

Using \eqref{mfun2}, \eqref{re1}, we obtain
\begin{align}\label{arg}
	\frac{1}{2}\prod_{j=1}^{2m}(z-z_j)+\frac{1}{2}\prod_{j=1}^{2m}(z-\bar{z}_j)=\prod_{j=1}^m (z^2-\lambda_j^2).
\end{align}
Letting $z=z_1,\ldots, z_{2m}$ in \eqref{arg}, we get
\begin{equation}\label{summmm}
	\prod_{\substack{k=1,\ldots, 2m\\
			j=1,\ldots, m}}  |z_k^2-\lambda_j^2| =		\frac{1}{4^m}\prod_{k,j=1}^{2m}  |{z_j}-\bar{z}_k|.
\end{equation}
By~\eqref{mfun2},
\begin{align}\label{wjsw}
	\displaystyle	 \frac{w_j}{2}=\left\lvert  \mathrm{Res}_{z=\lambda_j} \textbf{m}(z) \right\rvert =\left\lvert \mathrm{Res}_{z=\lambda_j}\frac{\prod_{k=1}^{2m} (z-z_k)}{il\prod_{k=1}^{m}       
		(z^2-\lambda_k^2)}\right\rvert 
	&=\left\lvert  \frac{\prod_{k=1}^{2m} (\lambda_j-z_k)}{2l\lambda_j \prod_{\substack{1\leq k \leq m\\
				k\neq j}}  (\lambda_k^2-\lambda_j^2) }\right\rvert.
			\\
			\label{wjsw-new}
			=\left\lvert  \mathrm{Res}_{z=-\lambda_j} \textbf{m}(z) \right\rvert
			&=\left\lvert  \frac{\prod_{k=1}^{2m} (\lambda_j+z_k)}{2l\lambda_j \prod_{\substack{1\leq k \leq m\\
						k\neq j}}  (\lambda_k^2-\lambda_j^2) }\right\rvert.
\end{align}
Equalities~\eqref{wjsw}, \eqref{wjsw-new}, and \eqref{summmm} yield
\begin{align}		\label{imp2}
		\frac{1}{4^m}\prod_{j=1}^m w_j^2=\frac{\prod_{j,k=1}^{2m}|{z_j}-\bar{z}_k|}{(2l)^{2m} 4^m \prod_{j=1}^m \lambda_j^2 \displaystyle\prod_{1\leq j<k\leq m}|\lambda_j^2-\lambda_k^2|^4},
	\end{align}
		
		Substituting \eqref{rpp2}, \eqref{ppp1}, \eqref{imp2} 
		into \eqref{eq:intermediateDensity} we get \eqref{www1}. 
		

		\item We follow similar line of reasoning as in (i). Suppose $z_j$'s belong to $X_{L,M} \subset X_{2n+1}$.
		By~\cite[Lemma 6.5]{KK}
		\begin{align}\label{vande2}
			\left\lvert \det{\frac{\partial{(\Im\kappa_0,\Re\kappa_1,\ldots,\Im\kappa_{2n-2},\Re\kappa_{2n-1},\Im\kappa_{2n})}}{\partial{(y_1,\ldots, y_L ,x_{L+1},y_{L+1},\ldots, x_{L+M},y_{L+M})}}}\right\rvert=2^M \prod_{1\leq j<k\leq 2n+1}|z_j-z_k|,
		\end{align}	
	and then from~\eqref{dist2} we get
	\begin{align}
		\left\lvert \det{\frac{\partial{(\lambda_1,\ldots, \lambda_{n},w_1, \ldots, w_{n},l)}} {\partial{(y_1,\ldots, y_L ,x_{L+1},y_{L+1},\ldots, x_{L+M},y_{L+M})}}   } \right\rvert
		\\ = 2^{M-n} \frac{\prod_{1\leq j<k\leq 2n+1}|z_j-z_k|}{ l^{n}\prod_{j=1}^n \lambda_j^3 \prod_{1\leq j<k\leq n} |\lambda_j^2-\lambda_k^2|^2}.
	\end{align}

		Combining part $(ii)$ of Theorem \ref{thmmm}, and \eqref{vande2} we obtain the joint density of $z_j$'s:
		\begin{align}
			\frac{n!}{2^M M! L!}& \frac{2^{M-n} \prod_{1\leq j<k\leq 2n+1}|z_j-z_k| }{ l^{n}\prod_{j=1}^n \lambda_j^3 \prod_{1\leq j<k\leq n} |\lambda_j^2-\lambda_k^2|^2} \frac{2^n\prod_{j=1}^n\lambda_j}{h_{\beta,n,a}}\prod_{j=1}^n\lambda_j^{\beta a}e^{-\lambda_j^2/2} \prod_{1\leq j<k\leq n}|\lambda_k^2-\lambda_j^2|^\beta   \nonumber\\
			&   \times  \frac{w_0^{\beta(m-n)/2-1}}{\Gamma{(\beta(m-n)/2)}} \times\Gamma(\beta m/2)\prod_{j=1}^n\frac{w_j^{\beta/2-1}}{\Gamma(\beta/2)}
			\times  \prod_{j=1}^L \,dy_j  \prod_{j=L+1}^{L+M} (dx_{j} dy_{j}). 
			\label{longg}
		\end{align}
		Substituting
		\begin{align}\label{imp11}
			\sum_{j=1}^n \lambda_j^2&= \frac12\left(l^2+\sum_{j=1}^{2n+1} {z_j^2}\right),\\
		\label{w0}
			w_0&=\frac{\displaystyle\prod_{j=1}^{2n+1}|z_j|}{l\displaystyle\prod_{j=1}^n|\lambda_j|^2},\\
		\label{imp22}
			\prod_{j=1}^n w_j^2&=\frac{\prod_{j,k=1}^{2n+1}|{z_j}-\bar{z}_k|}{l^{2n} 2^{2n+1}\prod_{j=1}^{2n+1}|z_j| \prod_{j=1}^n \lambda_j^4 \displaystyle\prod_{1\leq j<k\leq n}|\lambda_j^2-\lambda_k^2|^4},
		\end{align}
		into \eqref{longg} we get \eqref{www2}.
		
		
	\end{enumerate}
\end{proof}


	
	
\end{document}